\documentclass[11pt]{article}
\usepackage{graphicx}
\usepackage{color}
\usepackage{amsmath, amsthm, amssymb}
\usepackage[toc,page]{appendix}
\usepackage{subfigure}

\newtheorem{theorem}{Theorem}

\numberwithin{equation}{section}

\parskip 0.1cm
\setlength{\textwidth}{16cm}
\setlength{\oddsidemargin}{0cm}
\setlength{\topmargin}{-15mm}
\setlength{\textheight}{23cm}

\def\ve{\varepsilon}

\begin{document}

\title{\bf A singularly perturbed convection-diffusion problem  posed on  an annulus}

\author{A. F. Hegarty
\footnote{The first author acknowledges the support of MACSI, the Mathematics Applications Consortium for Science and Industry (www.macsi.ul.ie), funded by the Science Foundation Ireland Investigator Award 12/IA/1683.} \thanks{MACSI, Department of Mathematics and Statistics, University of
Limerick, Ireland.\  email: alan.hegarty@ul.ie} 
\and E.\ O'Riordan
\thanks{School of Mathematical Sciences, Dublin City
University, Dublin 9, Ireland.\ email: eugene.oriordan@dcu.ie}
}

\maketitle
\begin{abstract}
A finite difference method is constructed for a singularly perturbed convection diffusion problem posed on an annulus. The method involves combining polar coordinates, an upwind finite difference operator  and a piecewise-uniform Shishkin mesh in the radial direction. Compatibility constraints are imposed on the data in the vicinity of certain characteristic points to ensure that interior layers do not form within the annulus. A theoretical parameter-uniform error bound is established and numerical results are presented to illustrate the performance of the numerical method applied to two particular test problems. 
\vskip0.5cm

\noindent {\bf Keywords}: {Singularly perturbed,  convection-diffusion ,  Shishkin mesh , annulus}
\vskip0.25cm

\noindent {\bf Mathematics Sublect Classification (2000)}:  { 65N12, 65N15,  65N06}
\end{abstract}

\section{Introduction}

The construction of globally pointwise accurate numerical approximations to singularly perturbed elliptic problems (of the form $Lu=f$ on $\bar \Omega$)  
posed on non-rectangular domains $\bar \Omega$ is a research area that requires development.
We shall restrict our focus to problems involving  {\it inverse-monotone differential operators} $L$. That is,  for all functions $z$ in the domain of the operator $L$, if $Lz \geq 0$ at all points in the closed domain $\bar \Omega$ then $z \geq 0$ at all points in $\bar \Omega$.  This class of problems includes convection-diffusion problems of the form  \[- \ve \triangle  u + \vec{a} \nabla  u + b u = f(x,y), \ b(x,y) \geq 0, \quad (x,y) \in \ \Omega; \quad u = g, \ (x,y) \in \partial \Omega. \] 
In any discretizations of 
singularly perturbed convection-diffusion  problems,  we seek to preserve this fundamental property of the differential operator. In other words, we require that the discretization of both the domain and  of the differential operator combine  so that the system matrix (denoted here by $L^N$) is a monotone matrix. That is, for all mesh functions $Z$, if   $L^NZ \geq {\vec {0}}$ at all mesh points then $Z \geq {\vec {0}}$ at all mesh points. It is well established  that classical finite element discretizations of singularly perturbed convection-diffusion  problems lose inverse-monotonicity. There is an extensive literature on alternative finite element formulations \cite{john1} that attempt to minimize the adverse effects of losing this property of inverse-monotonicity in the discretization process. 
Given these stability difficulties with the finite element framework, we pursue our quest for discretizations that preserve inverse-monotonicity within a finite difference formulation. 

Rectangular domains are ideally suited to a computational approach, as a tensor product of one-dimensional uniform or non-uniform meshes is a simple and  obvious discretization of the domain. For some non-rectangular domains, coordinate transformations exist  so that the non-rectangular domain can be mapped onto a rectangular domain. However, in general, the Laplacian operator $\triangle \tilde  u$ in one coordinate system is mapped to a more general elliptic operator ($au_{xx} +b u_{xy}+cu_{yy}$) in an alternative coordinate system, where the general elliptic  operator  contains a mixed second order derivative \cite{ray09}. Due to the presence of different scales in the solutions of singularly perturbed problems, it is natural to use highly anistropic meshes, where aspect ratios of the form $h_x/h_y = O(\ve ^p), 1 \geq  p >0 $ are unavoidable in some subregions of the domain. However, we know of no discretization of a mixed second order partial derivative that  preserves inverse-monotonicity and does not  place a restriction on the aspect ratio of the form  $C_1 \leq h_x/h_y\leq C_2$, $C_1,C_2 =O(1)$ \cite{matus}. Due to this barrier to preserving stability, we look at particular non-rectangular domains for which a coordinate transformation (to a rectangular domain) exists, which does not generate a mixed second order derivative term.

Parameter-uniform numerical methods \cite{fhmos} are numerical methods designed to be  globally accurate in the  maximum norm and to satisfy an asymptotic error bound on the numerical solutions (which are, in this paper, the bilinear interpolants $\bar U^N$) of the form
\[
\Vert \bar U ^N -u \Vert _{\infty, \Omega} \leq CN^{-p}, \quad p >0,
\] 
where the error constant $C$ and the order of convergence $p$ are independent of the singular perturbation parameter $\ve$ and the discretization parameter $N$. 
Parameter-uniform numerical methods  can normally be categorized as either a fitted operator method or as a fitted mesh method. In the fitted operator (sometimes called exponential fitting) approach a uniform or quasi-uniform mesh $\bar \Omega ^N$ is used and the emphasis is on the design of a non-classical approximation $L^N_*$ to the differential operator $L$. These fitted finite difference operators can be generated by constructing a nodally exact difference operator $L^N_F$ for a  constant coefficient problem and extending it  to the corresponding variable coefficient problems (e.g. Il'in's scheme \cite{ilin}) or by enriching the solution space with non-polynomial basis functions (e.g. the Tailored Finite Point Method \cite{tailor1} or using correctors \cite{Temam4}). However, Shishkin \cite{shs89} established that for a class of singularly perturbed problems, whose solutions contain a characteristic  boundary  layer, no fitted operator method exists on a quasi-uniform mesh. This result led many researchers to the construction of fitted mesh methods, where classical finite difference  operators $L^N$ (such as simple upwinding) are combined with specially constructed layer-adapted meshes (such as the 
Shishkin mesh \cite{gis1,fhmos} or the Bakhvalov mesh \cite{bak}). In general, we are interested in developing numerical methods which can be adapted to solving problems with characteristic boundary or interior layers. Hence, our focus will be on the construction of a suitable fitted mesh. In passing, we note that the  option of combining a fitted operator (in the neighbourhood of a particular singularity) and a fitted mesh remains open to further investigation.

In \cite{circle,moscow} we examined the case of a convection-diffusion problem posed within a circular domain. In the current paper, motivated by the problem proposed in \cite{hemker}, we consider a problem posed on 
 an annular domain. In the numerical experiments in \cite{moscow} it was observed that the imposition of certain compatibility  constraints on the data (which were required to establish a theoretical error bound in the associated numerical analysis \cite{circle}) appeared unnecessary in practice, as the numerical experiments indicated that the numerical method appeared to be parameter-uniform even when these compatibility  constraints on the data were not imposed on particular test problems. 
However, in the case of an annular region, the character of the data at the interior characteristic points  is crucial and intrinsic to the problem. In general, interior parabolic layers will emerge from the  interior characteristic points, unless a sufficient level of compatibility  constraints are placed on the data to prevent such layers occurring.  Some preliminary numerical results illustrating parabolic interior layers appearing in the solution are given in \cite{china-BAIL}. 
In the current paper, we identify sufficient compatibility constraints on the data so that such interior layers do not appear in the solution and, in addition, so that a theoretical error bound can be established for a class of singularly perturbed problems posed on  an annulus. The construction, and subsequent numerical analysis, of a parameter-uniform numerical method for a singularly perturbed convection-diffusion problem (posed on an annulus), where the solution exhibits an interior parabolic layer, remains an open problem.

In \S 2 we define the continuous problem and identify constraints on the data (\ref{assump1}) to prevent interior layers appearing. The solution is decomposed into regular and boundary layer components. Pointwise bounds on the derivatives of these components of the solution  are established. In \S 3 the discrete problem is specified and  the associated numerical analysis is given. Some numerical results are presented in the final section. 

{\bf Notation:} Throughout this paper,  $C$  denotes a generic constant that is independent of the singular perturbation parameter $\ve$ and of all discretization parameters.
Throughout the paper, we will always use the pointwise maximum norm, which we denote by $\Vert \cdot \Vert$. Sometimes we attach a subscript $\Vert \cdot \Vert _D$, when we wish to emphasize the domain $D$ over which the maximum is being taken. Dependent variables specified in the computational domain $\Omega $ will be denoted simply by $g$ and their counterparts in the physical domain $\tilde\Omega$ will be identified by $\tilde g$. 

\section{Continuous problem}

Consider the singularly perturbed elliptic problem: Find $\tilde u$ such that
\begin{subequations}\label{cont-prob}
\begin{eqnarray} \tilde L\tilde u:=- \ve \triangle \tilde u +\tilde a(x,y) \tilde u_x =\tilde f, \ \hbox{in } \ \tilde\Omega := \{(x,y)| R^2_1 < x^2+y^2 < R^2_2 \};\\ 0 < \ve \leq 1;  \quad \tilde a > \alpha >0; \\
\tilde u=0, \quad \hbox{on } \{(x,y)|  x^2+y^2 =R^2_2 \}; \\  \tilde u= \tilde g, \quad \hbox{on } \{(x,y)|  x^2+y^2 =R^2_1 \}.
\end{eqnarray}\end{subequations}
 Assume that the data $\tilde a , \tilde f, \tilde g$ is sufficiently smooth so that $\tilde u \in C^{3,\alpha}( \overline {\tilde \Omega} )$.
The differential operator $\tilde L$ satisfies a minimum principle \cite[pg. 61]{prot}. As the problem is linear, there is no loss in generality in assuming homogeneous boundary conditions on the outer circle. Compatibility constraints will be imposed below on the data in the vicinity of the characteristic points $(0,\pm R_1)$ and $(0,\pm R_2)$.

For  problem (\ref{cont-prob}), boundary layers will typically form in the vicinity of the {\it inner outflow boundary} 
\[ \Gamma _1:=\{ (x,y) | -R_1<x <  0, x^2+y^2=R^2_1 \} \]
and  in the vicinity of the {\it outer outflow  boundary}
\[ \Gamma _2:=\{ (x,y) | x^2+y^2=R^2_2, 0< x < R_2 \}. \]
Moreover, when $f \equiv 0$,  if the inner boundary condition is such that $\tilde g(0,\pm R_1) \neq 0$  then an internal layer will appear in a neighbourhood of the region
\begin{equation}\label{InterS} S:=\{ (x,y) | 0<x<\sqrt{R^2_2 -R^2_1}, \vert y \vert = R_1 \}. \end{equation}

We also define the {\it inflow boundary} (which is  a disconnected set), as the union of the following two sets
\[
\Gamma _3 :=\{ (x,y) | x^2+y^2=R_2^2,\ -R_2 \leq x \leq 0 \} ,\quad  \Gamma _4:=\{ (x,y) | x^2+y^2=R_1^2, \ 0 < x \leq R_1 \}.
\]

 By using the stretched variables $x/\ve, y/\ve$ and the minimum principle, we can deduce 
\cite{ladura,hemkera} that the solution $\tilde u$ of problem (\ref{cont-prob}) satisfies the bounds
\begin{equation}\label{crude}
\vert \tilde u (x,y) \vert \leq \Bigl(\frac{R_2+x}{\alpha} \Bigr) \Vert \tilde f \Vert  +  \Vert \tilde g \Vert\quad \hbox{and} \quad
\Bigl \Vert \frac{ \partial ^k\tilde u}{\partial x ^i \partial y^j} \Bigr \Vert \leq C \ve ^{-i-j}, \qquad 0 \leq i+j \leq 3.
\end{equation}

We next define the {\it regular component}, which is potentially discontinuous across the two half-lines   defined in  (\ref{InterS}). Define the reduced operator (associated with the operator $\tilde L$) by
\begin{equation}\label{L0}
\tilde L_0 \tilde z:= \tilde a(x,y) \tilde z_x.
\end{equation}
The reduced solution $v_0$ is characterized by two influences: the upwind data on the {\it outer inflow boundary} $\Gamma _3$  and the data on the {\it inner inflow boundary}  $\Gamma _4$ in the wake of the inner circle. 
We begin with a definition of the upwind regular component $v^-$, given by
\begin{subequations}\label{vminus}
\begin{equation} v^- (x,y) := \bigl(\tilde v^-_0+ \ve \tilde v^-_1 +\ve ^2 \tilde v^-_2\bigr)(x,y), \qquad (x,y) \in \tilde \Omega;
\end{equation} where the subcomponents are the solutions of the following problems:
\begin{eqnarray}
\tilde  L_0\tilde v^-_0 = \tilde f, \quad (x,y) \in \tilde \Omega _3\quad
 \tilde v^-_0 =\tilde u=0,  (x,y) \in \bar \Gamma _3; \\
\tilde  L_0\tilde v^-_1 = \triangle \tilde v^-_0, \quad (x,y) \in \tilde \Omega _3\quad
 \tilde v^-_1 =0,  (x,y) \in \bar \Gamma _3;\\
 \tilde  L_\ve \tilde v^-_2 = \triangle \tilde v^-_1, \quad (x,y) \in \tilde \Omega \quad \tilde v^-_2 =0, (x,y) \in \partial \tilde \Omega .
\end{eqnarray}
\end{subequations}
Observe that the sub-components $\tilde v_0, \tilde v_1$ 
are solutions of first order problems and, hence, the level of regularity of these components is determined by certain compatibility conditions being imposed at the points $(0,\pm R_2)$. As in \cite{Temam1}, these compatibility conditions are of the form
\[
\frac{\partial ^{i+j}}{\partial x ^i \partial y ^j} f(0,\pm R_2)=0, \quad 0 \leq i+2j \leq n,
\]
where $n$ is sufficiently large so that $\tilde v^-_2 \in C^3(\bar \Omega)$. 

Next we define the downwind regular component over the wake region \[
\tilde \Omega ^+:= \{(x,y)|
x \geq \sqrt{R_1^2-y^2}, \vert y \vert < R^2_1  \}\] 
by \begin{subequations}\label{vplus}
\begin{equation} v^+(x,y) := \bigl(\tilde v^+_0+ \ve \tilde v^+_1 +\ve ^2 \tilde v^+_2\bigr)(x,y), \qquad (x,y) \in \tilde \Omega ^+;\end{equation} where it's three subcomponents 
satisfy:
\begin{eqnarray}
\tilde  L_0\tilde v^+_0 = \tilde f, \quad (x,y) \in \tilde \Omega ^+\quad
 \tilde v^+_0 =\tilde g ,  (x,y) \in  \Gamma _4; \\
\tilde  L_0\tilde v^+_1 = \triangle \tilde v^+_0, \quad (x,y) \in \tilde \Omega ^+\quad
 \tilde v^+_1 =0,  (x,y) \in \Gamma _4; \\
 \tilde  L_\ve \tilde v^+_2 = (\triangle \tilde v^+_1), \quad (x,y) \in \tilde \Omega ^+ \quad \tilde v^+_2 =0, (x,y) \in \partial \tilde \Omega ^+. 
\end{eqnarray}
\end{subequations}
Excluding the region $S$,  we define the regular component as
\begin{equation}\label{vdef}
\tilde v:= \tilde v^+, \ (x,y) \in \tilde \Omega ^+ \quad \hbox{and} \quad \tilde v:=  \tilde v^-, \ (x,y) \in \overline{\tilde \Omega}  \setminus (\tilde \Omega ^+\cup S).
\end{equation}
In general, the main component of $\tilde v$, which is the reduced solution $\tilde v_0$, will be discontinuous along $S$  as
\begin{eqnarray*}
\tilde v^-_0(x,y) &=& \int _{w=-\sqrt{R_2^2-y^2}}^x \frac{\tilde f(w,y)}{\tilde a(w,y)} \ dw, \quad -R_2 \leq x ,  \quad R_1 < \vert y \vert < R_2; \\
\tilde v^+_0(x,y) &=&  \tilde g(x,y) + \int _{w=\sqrt{R_1^2-y^2}}^x \frac{\tilde f(w,y)}{\tilde a(w,y)} \ dw, \quad x \geq \sqrt{R_1^2-y^2}, \vert y \vert < R^2_1. 
\end{eqnarray*} 
Hence, in order to have a continuous reduced solution, we would need to impose  the following compatibility  condition
\begin{equation}\label{assump2}
\tilde u(0,\pm R_1)  =\int _{w=-\sqrt{R_2^2-R_1^2}}^0 \frac{\tilde f(w,\pm R_1)}{\tilde a(w,\pm R_1)} \ dw.
\end{equation}
The arguments  in \cite{Temam1} could be applied to both $v_0^-$ and $v_0^+$ so that they are both sufficiently regular and satisfy certain 
additional constraints (along the horizontal lines $y=\pm R_1$)  to ensure  that $\tilde v_0 \in C^3(\overline{\tilde \Omega})$. 
However, in order to  establish pointwise bounds on the boundary layers present, we will also need to impose more severe  constraints on the data in neighbourhoods of these characteristic points. To complete the numerical analysis in this paper, we assume the following compatibility constraints on the data. 

{\bf Assumption} Assume that there exists $\delta _1, \delta _2$, with $0 < \delta _1 <  0.5R_1$, $0 < \delta _2\leq   R_2-R_1 $ such that 
\begin{equation}\label{assump1}
\tilde f(x,y) \equiv \tilde g(x,y) \equiv 0, \qquad  \vert R_1 \pm y \vert \leq \delta _1\quad \hbox{and} \quad \tilde f(x,y) \equiv 0, \qquad  \vert R_2 \pm y \vert \leq \delta _2.
\end{equation}

This assumption prevents interior parabolic layers emerging downwind of the characteristic points $(0,\pm R_1)$ and also implies that the reduced solution $v_0$ is smooth throughout the region. Moreover,  as $\tilde v_0$ and $\tilde v_1$ both satisfy first order problems, then they are both identically zero in the vicinity of the characteristic points. That is, 
 \[
  (\tilde v_0 +\ve  \tilde v_1)(x,y) \equiv 0 , \quad \hbox{if} \quad  \vert R_1 \pm y \vert < \delta _1\quad \hbox{or} \quad  \vert R_2 \pm y \vert < \delta _2.
\] 
We associate the following critical angles $\theta _*, \theta ^*$ with  assumption (\ref{assump1})
\[
 \sin \theta _* := 1-\frac{\delta _1}{R_1} , \quad 0 < \theta _* < \pi/2 \qquad 
\sin \theta ^* := 1-\frac{\delta _2}{R_2} , \quad 0 < \theta ^* < \pi/2 .
\]
Two boundary layer components $w^-$ and $w^+$ are defined by
\begin{subequations}\label{bndry-layers}
\begin{eqnarray}
\tilde L\tilde w ^- =\tilde L\tilde w^+ = 0, \quad \hbox{in } \quad \tilde\Omega ,\qquad \tilde w ^-  = \tilde w ^+  =0, \ \ \hbox{on } \    \Gamma _3  \cup   \Gamma _4;  \\
\tilde w ^- = \tilde g - \tilde v, \ \tilde w ^+  =0, \ \hbox{on } \quad \Gamma _1;\qquad   \tilde w ^+ =   - \tilde v, \tilde w ^-  =0,\  \hbox{on } \quad \Gamma _2
. 
\end{eqnarray}
By virtue of assumption (\ref{assump1}), the boundary layer component $\tilde w$ defined by
\begin{equation}
\tilde w:= \tilde w^-,\ x \leq 0, \qquad \qquad \tilde w:=\tilde w^+,\ x \geq 0
\end{equation}
\end{subequations}
is well defined and is a sufficiently smooth function throughout the domain.

Polar coordinates are a natural co-ordinate system to employ for this problem, where
$
x=r\cos \theta , \quad y=r\sin \theta.
$
In these  polar coordinates, the continuous problem (\ref{cont-prob}) is transformed into the problem: Find $u \in C^0(\bar \Omega) \cap C^3(\Omega), \Omega := \{ (r,0) \vert R_1 < r < R_2, 0 \leq \theta < 2\pi \}$, which is periodic in $\theta$,  such that
\begin{eqnarray*}
Lu:= -\frac{\ve }{r^2} u _{\theta \theta}  -\ve  u_{rr} +  \bigl( a(r,\theta)\cos (\theta ) -{\frac{\ve}{r}} )u _r  - \frac{a(r,\theta)\sin (\theta )}{r}  u _{\theta} = f,  \ \hbox{in } \Omega ; \\
u(R_1,\theta) =g(\theta), \   u(R_2,\theta) =0, \qquad 0 \leq \theta \leq 2\pi.
\end{eqnarray*}

In our analysis of the behaviour of the layer component $w$, we will make use of  smooth cut-off functions $ \psi _*(\theta ), \psi ^*(\theta )$, which are constructed  in the Appendix. 

\begin{theorem} Assume (\ref{assump1}). 
The solution $u$ of problem (\ref{cont-prob}) can be decomposed into the sum  $u=v+w$, where $v$ and $w$ are defined, respectively, in (\ref{vminus}, \ref{vplus}, \ref{vdef}) and 
(\ref{bndry-layers}). The derivatives of the regular component $v$ satisfy the bounds
\[
\Bigl \Vert \frac{\partial ^{i+j} v}{\partial  r^i \partial \theta ^j} \Bigr \Vert \leq C(1+ \ve ^{2-i-j}),\qquad i+j \leq 3, 
\]
and the boundary layer component $w$ satisfies
\begin{subequations}
\begin{eqnarray}\label{bnd-w}
    |w(r,\theta)| &\leq&  Ce^{\frac{\alpha  \cos (\theta )  (r-R_1)}{\ve}}  , \quad \cos \theta < 0\\
		|w(r,\theta)| &\leq&  Ce^{-\frac{\alpha   \cos (\theta )  (R_2-r)}{2\ve}} + C e^{\frac{-R_2\alpha \cos  (\theta ^*)(1-\sin (\theta ^*))}{2\ve}}, \quad \cos \theta > 0.
\end{eqnarray}
For all $i,j$ with  $1\le i+j\le 3,$ the derivatives of the boundary layer component $w$ satisfy
\begin{equation}\label{bnd-der-w}
\Bigl \Vert \frac{\partial ^{i+j} w}{\partial  r^i \partial \theta ^j} \Bigr \Vert \leq C \ve ^{-i-j} \quad \Bigl \Vert \frac{\partial ^{j} w}{ \partial \theta ^j} \Bigr \Vert\leq C(1+ \ve ^{1-j}),
\end{equation}
\end{subequations}
where the constant $C$ is independent of $\ve$.
 Moreover,  there exists  some $\mu > 1$ such that $\theta ^*< \mu \theta ^*< \frac{\pi}{2} $ and 
\begin{equation}
w(r,\theta) \equiv 0, \qquad \theta \in    [\mu \theta_*,\pi -\mu \theta_*] \cup [\pi+ \mu \theta_*,2\pi -\mu \theta_*].
\end{equation}

\end{theorem}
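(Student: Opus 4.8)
The plan is to verify the decomposition directly and invoke the minimum principle, to bound the regular part by summing a priori estimates for its three sub-components, and to obtain the layer estimates from explicit exponential barrier functions in the polar variables, localised in $\theta$ by the cut-off functions $\psi_*,\psi^*$ of the Appendix. For the decomposition, one first checks that $\tilde Lv^{\pm}=\tilde f$ in the interior by writing $\tilde L=\tilde L_0-\ve\triangle$ and substituting the defining relations (\ref{vminus}), (\ref{vplus}), which produces a telescoping cancellation, while $\tilde Lw^{\pm}=0$ by construction; then, arc by arc, $v+w=u$ on $\partial\tilde\Omega$ (on $\Gamma_3,\Gamma_4$ the layer terms vanish and $v$ reproduces the inflow data $0$ and $\tilde g$; on $\Gamma_1$ one has $w=\tilde g-v$; on $\Gamma_2$ one has $w=-v$). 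The one delicate point is global regularity of $v$ and $w$: across the wake line $S$ and across the two $y$-axis segments $\{x=0\}$, assumption (\ref{assump1}) forces $\tilde v_0+\ve\tilde v_1\equiv0$ and (via the support property of $w$ established below) $\tilde w\equiv0$ in a neighbourhood, so the pieces glue in $C^3$ and $u-v-w$, which solves the homogeneous problem, vanishes by the minimum principle.

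For the bounds on $v$, the sub-components $\tilde v^{\pm}_0,\tilde v^{\pm}_1$ solve first-order transport problems whose data are, by the compatibility conditions at $(0,\pm R_2)$, $(0,\pm R_1)$ together with (\ref{assump1}), smooth enough that these components lie in $C^3$ with derivatives bounded independently of $\ve$ (the polar change of variables is harmless since $R_1\le r\le R_2$). The term $\ve^2\tilde v^{\pm}_2$ solves the full singularly perturbed problem with $O(1)$ data, so (\ref{crude}) gives $\|\tilde v^{\pm}_2\|\le C$, and a further regular/layer splitting of $\tilde v^{\pm}_2$ itself (as in \cite{circle}) gives $\|\partial^{i+j}\tilde v^{\pm}_2/\partial r^i\partial\theta^j\|\le C\ve^{-i-j}$. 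Summing the three contributions yields $\|\partial^{i+j}v/\partial r^i\partial\theta^j\|\le C(1+\ve^{2-i-j})$.

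For the layer component, a crude maximum-principle estimate first gives $\|w\|\le C$. I would then treat $w^-$ on $\{\cos\theta\le0\}$ and $w^+$ on $\{\cos\theta\ge0\}$ separately. The data of $w^-$ on $\Gamma_1$ is, by (\ref{assump1}), supported in the $\theta$-sector about $\pi$ of half-width $\theta_*$, and I would use a barrier of the form
\[
B^-(r,\theta):=C\,\psi_*(\theta)\,e^{\alpha\cos\theta\,(r-R_1)/\ve},
\]
possibly modified by an additional slowly varying factor, for which $B^-\ge|w^-|$ on $\partial\tilde\Omega$ and $LB^-\ge0$ in $\tilde\Omega$; the $\ve\psi_*''$ and $\psi_*'$ contributions from the cut-off are controlled because $\psi_*$ is built on the angular scale $\mu\theta_*$ with $\mu>1$ fixed so small that $\mu\theta_*,\mu\theta^*<\pi/2$, and the $\theta$-derivatives of the exponential contribute only $O(1)$ since $(r-R_1)\ve^{-1}e^{-c(r-R_1)/\ve}\le C$. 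The minimum principle then gives the first line of (\ref{bnd-w}), and since $\psi_*\equiv0$ on $[\mu\theta_*,\pi-\mu\theta_*]$ (and on the symmetric sector) it gives $w^-\equiv0$ there. The analogous barrier near $r=R_2$ built from $\psi^*$ gives the $\cos\theta>0$ bound; the extra additive term $Ce^{-R_2\alpha\cos(\theta^*)(1-\sin(\theta^*))/(2\ve)}$ appears because near the characteristic points $(0,\pm R_2)$ the decay rate $\cos\theta$ of the outer layer degenerates towards $\cos\theta^*$, and one absorbs the resulting mismatch by adding this $\ve$-exponentially small constant to the barrier. Combining the two regions, which overlap precisely on the $\psi$-free sectors where $w\equiv0$, yields (\ref{bnd-w}) and the support statement.

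The derivative bounds $\|\partial^{i+j}w/\partial r^i\partial\theta^j\|\le C\ve^{-i-j}$ then follow by passing to the stretched radial variable $\rho=(r-R_1)/\ve$ (resp. $(R_2-r)/\ve$), applying interior Schauder estimates to the resulting $\ve$-uniform problem on unit balls and scaling back, as in \cite{circle,fhmos}; the sharper $\|\partial^{j}w/\partial\theta^j\|\le C(1+\ve^{1-j})$ comes from differentiating $Lw=0$ in $\theta$ and observing that $[\partial_\theta,L]$ produces only terms of the form (smooth coefficient)$\,\cdot\,\partial_r w$, whose layer factor gains a power of $\ve$ against the Green's function of the radial layer operator, so that inside the layer only $\partial_r$ carries the $\ve^{-1}$ scale, and closing the estimate with the exponential barriers. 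The main obstacle throughout is the verification $LB^{\pm}\ge0$ for the cut-off barriers, i.e.\ showing that the $\theta$-transport term $-\tfrac{a\sin\theta}{r}\partial_\theta$ and the $\theta$-diffusion term $-\tfrac{\ve}{r^2}\partial_{\theta\theta}$ acting on the barrier are dominated both in the cut-off transition zone and near the characteristic points where $\cos\theta\to0$; this is exactly what dictates the careful construction of $\psi_*,\psi^*$, the choice of $\mu$, and the extra exponentially small constant in the second line of (\ref{bnd-w}).
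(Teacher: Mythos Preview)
Your overall architecture matches the paper's, but there is a genuine gap in the barrier argument for the inner layer $w^-$. You assert that ``the $\ve\psi_*''$ and $\psi_*'$ contributions from the cut-off are controlled because $\psi_*$ is built on the angular scale $\mu\theta_*$ with $\mu>1$ fixed'', which implicitly treats $\psi_*$ as an $\ve$-independent cut-off with $O(1)$ derivatives. But examine the sign of the angular transport term on the product barrier: in the transition strip $\theta\in(\pi-\mu\theta_*,\pi-\theta_*)$ (upper half, say) one has $\sin\theta>0$ and $\psi_*'>0$, so the contribution $-\tfrac{a\sin\theta}{r}\,\psi_*'(\theta)E_-$ to $LB^-$ is strictly negative and of size $O(1)$. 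The only positive term available to dominate it is $\tfrac{\alpha^2\kappa}{\ve}\cos^2\theta\,(1-\kappa)\,\psi_*E_-$, which carries the factor $\psi_*$ and hence is arbitrarily small near the left edge of the transition zone; thus $LB^-\ge0$ fails there for any fixed $\ve$-independent cut-off. The paper circumvents this by making $\psi_*$ itself $\ve$-dependent: it is (a mollification of) the solution of the auxiliary singularly perturbed problem $-\ve^2 v''-\ve R_1\Vert a\Vert v'+C_*v=0$ on the transition interval, so that the offending combination $-\ve^2\psi_*''-\ve R_1\Vert a\Vert\psi_*'$ collapses exactly to $-C_*\psi_*$, which is then dominated by the $\cos^2\theta$ term once $C_*<(\alpha R_1\cos\theta_*(1-\kappa))^2\kappa$. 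Note the contrast with the outer cut-off $\psi^*$: there $(\psi^*)'\le0$ in the upper transition zone, the transport contribution already has the favourable sign, and an $\ve$-independent construction suffices. This asymmetry is the crux of the inner-layer estimate and is absent from your sketch.

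Two smaller points. Your explanation of the additive constant in the $\cos\theta>0$ bound is not quite right: within the support of $\psi^*$ the rate $\cos\theta$ is bounded below by $\cos(\mu\theta^*)>0$, so nothing degenerates near the characteristic points. What actually fails is that the $\theta$-derivatives of $E_+$ produce a term $-\sin^2\theta\,\tfrac{R_2-r}{r}\bigl(1+\kappa\tfrac{R_2-r}{r}\bigr)$ in $LB^+$ which spoils the sign once $r<R_2\sin(\mu\theta^*)$; in that sub-strip $E_+$ itself is already $\le e^{-R_2\gamma_1/(2\ve)}$, and the paper adds $Ce^{-R_2\gamma_1/(2\ve)}\psi^*(\theta)r\cos\theta$ (for which $L(\psi^*r\cos\theta)\ge\alpha\psi^*$) to repair the barrier there. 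Finally, for the sharper angular bounds $\Vert\partial^j w/\partial\theta^j\Vert\le C(1+\ve^{1-j})$ the paper does not differentiate $Lw=0$ in $\theta$; it subtracts the explicit leading profile, writing $w^-=w^-(R_1,\theta)\exp\{a(R_1,\theta)\cos\theta\,(r-R_1)/\ve\}+\ve z^-$, uses $|w^-(R_1,\theta)|\le C|\cos\theta|$ from (\ref{assump1}), and re-runs the barrier on $z^-$. Your commutator route generates a zeroth-order coefficient $(a\cos\theta+a_\theta\sin\theta)/r$ in the equation for $w_\theta$ that is negative where $\cos\theta<0$, so the minimum principle is not directly available there.
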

\begin{proof} The bounds on the regular component $v$ are established using the decompositions in (\ref{vminus}, \ref{vplus}, \ref{vdef}) and the argument in \cite{circle}. The bulk of the proof involves establishing the pointwise bounds on the boundary layer function $w$ and the proof is available in the appendix. 

\end{proof}

\section{Discrete problem and associated Error Analysis}

We discretize this problem using simple upwinding on a piecewise uniform Shishkin mesh \cite{mos,gis1} in the radial direction, with $M$ mesh elements uniformly distributed in the angular direction and $N$ mesh elements used in the radial direction to produce the mesh  
\begin{subequations}\label{fitted-mesh}
\begin{eqnarray}
\overline { \Omega }_S^{N,M}:=\{ (r_i,\theta _j) \vert 0\leq i \leq N,\ 0\leq j < M,  \};\\ 
\theta _j = iK,  \ j=0,1, \ldots, M-1, \quad K = \frac{2\pi}{M};\\
r_i = R_1+ ih,  i=0,1, \ldots, \frac{N}{4}, \\ r_i = R_1+ \sigma _*  + (i-N/4)H,  i= \frac{N}{4}+1, \ldots, \frac{3N}{4} ;\\
r_i = R_2-\sigma ^*  + (i-3N/4)h,  i= \frac{3N}{4}+1, \ldots, N ; \\
 \sigma _*:= \min \{ \frac{R_2-R_1}{4},  \frac{2\ve}{\alpha  \cos (\theta _*) }  \ln N \}; \\
 \sigma ^*:= \min \{ \frac{R_2-R_1}{4}, \frac{2\ve}{\alpha  \cos (\theta ^*) }  \ln N \}. 
\end{eqnarray}
\end{subequations}

The numerical method on the mesh (\ref{fitted-mesh}), will be of the following form\footnote{The 
finite difference operators  $D^+_r,D^-_r, D^\pm_r, \delta^2_r$ are, respectively,  defined by
\begin{eqnarray*}
D^+_rZ(r_i,\theta _j) :=\frac{Z (r_{i+1},\theta _j)-Z(r_i,\theta _j)}{r_{i+1}-r_i};\quad  D_r^-Z(r_i,\theta _j) :=\frac{Z(r_{i},\theta _j)-Z (r_{i-1},\theta _j)}{r_{i}-r_{i-1}}; \\
2(bD_r^\pm)Z := (b-\vert b \vert) D_r^+Z + (b+\vert b \vert) D_r^-Z; \quad \delta^2_r Z(r_i,\theta _j) :=\frac{D^+_rZ(r_{i},\theta _j)-D_r^-Z(r_i,\theta _j)}{(r_{i+1}-r_{i-1})/2}.\end{eqnarray*}}:
Find a periodic mesh function $U(r_i,-\theta _j)=U(r_i,2\pi -\theta _j);  \  R_1\leq r_i\leq R_2;$ such that 
\begin{subequations}\label{discrete-problem}
for the internal mesh points, where $\ R_1 < r_i <R_2, \ 0 \leq \theta _j < 2 \pi$,
\begin{eqnarray} 
-\frac{\ve}{r_i^2}  \delta ^2 _{\theta}U   -\ve  \delta ^2 _{r}U  +  (a\cos (\theta _j )-\frac{\ve}{r_i} ) D^{\pm}_r U  -\frac{a}{r_i}\sin (\theta _j )D^{\pm}_\theta U   = f;
\end{eqnarray}
and for the boundary mesh points
\begin{eqnarray} 
U(R_1,\theta _j) =u(R_1,\theta _j)=g(\theta _j),\ U(R_2,\theta _j) =u(R_2,\theta _j) =0,   \ 0 \leq \theta _j \leq 2\pi .
\end{eqnarray}
\end{subequations}
This numerical method is different to the numerical method examined in \cite{circle}, as a discretized version of the differential equation is used at all the internal mesh points.  
For the internal mesh points, where $i=1,2,\ldots, N-1, \  j=0,1,2,\ldots, M-1$, we define the associated finite difference operator $L^N_{r,\theta}$ as follows: For any mesh function $Z$ 
\begin{subequations}\label{diff-operator}
\begin{equation}
L^N_{r,\theta}Z := -\frac{\ve}{r^2_i}  \delta ^2 _{\theta}Z   -\ve  \delta ^2 _{r}Z  + \bigl( a\cos (\theta _j)-\frac{\ve}{r_i} ) D^{\pm}_r Z - \frac{a\sin (\theta _j)}{r_i}D^{\pm}_\theta Z; 
\end{equation}
and, for the boundary mesh points, we define  
\begin{eqnarray} 
L^N_{r,\theta}Z(R_1,\theta _j) := Z(R_1,\theta _j) ,  \  L^N_{r,\theta}Z(R_2,\theta _j) :=Z(R_2,\theta _j), \  j=0, 1,2,\ldots, M .
\end{eqnarray}
\end{subequations}

For periodic mesh functions, with $Z(r_i,\theta _j)=Z(r_i,2\pi +\theta _j),  \  R_1\leq r_i\leq R_2$, we have the following discrete comparison principle:

\begin{theorem} 
For any single valued  periodic mesh function $Z$, 
 if $L^N_{r,\theta}Z(r_i,\theta _j) \geq 0$, for $(r_i,\theta _j) \in \bar \Omega ^N$ then $Z(r_i,\theta _j) \geq 0, (r_i,\theta _j) \in \bar \Omega _S^{N,M}$.
\end{theorem}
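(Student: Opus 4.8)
The plan is to show that the matrix of the operator $L^N_{r,\theta}$ on the mesh $\bar\Omega_S^{N,M}$ is a monotone (M-)matrix by checking that it is an irreducibly diagonally dominant L-matrix, and then to deduce the stated comparison principle. First I would write out the stencil of $L^N_{r,\theta}$ at an interior mesh point $(r_i,\theta_j)$ and record the sign of the coefficient of each nodal value. Because $\ve>0$ and all the mesh steps $h$, $H$, $K$ are strictly positive, the two second-difference terms $-\frac{\ve}{r_i^2}\delta^2_\theta Z$ and $-\ve\,\delta^2_r Z$ contribute a strictly positive amount to the coefficient of $Z(r_i,\theta_j)$ and a strictly negative amount to the coefficients of the four neighbouring values $Z(r_{i\pm1},\theta_j)$ and $Z(r_i,\theta_{j\pm1})$, where the $\theta$-neighbours are read cyclically by periodicity. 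For the convective terms, set $b_r:=a\cos\theta_j-\ve/r_i$ and $b_\theta:=-\tfrac{a}{r_i}\sin\theta_j$; from the definition of $(bD^\pm_r)$ the term $b_r D^\pm_r Z$ reduces to $b_r D^-_r Z$ when $b_r\ge0$ and to $b_r D^+_r Z$ when $b_r<0$, and in either case it adds a non-negative quantity to the coefficient of $Z(r_i,\theta_j)$ and a non-positive quantity to exactly one of the coefficients of $Z(r_{i-1},\theta_j)$, $Z(r_{i+1},\theta_j)$; the same holds for $b_\theta D^\pm_\theta Z$ in the $\theta$-direction. Crucially this is true \emph{no matter how} $b_r$ and $b_\theta$ change sign over the annulus — this is precisely why the scheme uses the upwind operators $D^\pm$ rather than a fixed one-sided difference. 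Hence every off-diagonal entry of an interior row is $\le0$ and the diagonal entry is $>0$.

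Next I would compute the row sums. Every difference operator appearing in $L^N_{r,\theta}$ annihilates the constant mesh function, so $L^N_{r,\theta}\,\mathbf 1\equiv0$ at all interior mesh points, while at the boundary mesh points $L^N_{r,\theta}\,\mathbf 1\equiv1$ by the definition of $L^N_{r,\theta}$ there. Thus the interior rows are weakly diagonally dominant with zero excess and the boundary rows are strictly dominant; since the stencil graph is connected (each interior node is linked to its radial neighbours and, through finitely many radial steps, to a boundary node, and the $\theta$-direction is periodic), the matrix is an irreducibly diagonally dominant L-matrix, hence a monotone M-matrix, which gives the claim.

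To keep the argument self-contained I would instead phrase it as a discrete minimum principle proved by contradiction. Suppose $Z$ attains a negative minimum $m:=\min_{\bar\Omega_S^{N,M}}Z<0$ at a mesh point $(r_p,\theta_q)$. If $r_p=R_1$ or $r_p=R_2$, then $L^N_{r,\theta}Z(r_p,\theta_q)=Z(r_p,\theta_q)=m<0$, contradicting the hypothesis. If $(r_p,\theta_q)$ is interior, then, using the zero row sum, one can write $L^N_{r,\theta}Z(r_p,\theta_q)=\sum_k c_k\bigl(Z_k-Z(r_p,\theta_q)\bigr)$ with all $c_k\le0$ and the sum taken over the stencil neighbours $k$; every summand is $\le0$, so the hypothesis $L^N_{r,\theta}Z(r_p,\theta_q)\ge0$ forces $Z_k=m$ at each neighbour with $c_k<0$, in particular at both radial neighbours, whose coefficients are strictly negative because of the $\delta^2_r$ term. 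Iterating along the radial line $\theta=\theta_q$, after finitely many steps we reach a mesh point on $r=R_1$ at which $Z=m<0$, again contradicting the hypothesis. Therefore $Z\ge0$ everywhere.

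The step that needs care is the verification that the upwinded term $(bD^\pm)$ really produces the monotone sign pattern uniformly in the sign of $b$ — routine once the stencil is written out, but the crux of the matter, since both convective coefficients $a\cos\theta-\ve/r$ and $-\tfrac{a}{r}\sin\theta$ genuinely change sign on the annulus — together with the modest bookkeeping of the periodic $\theta$-coupling and the non-uniform Shishkin spacings in the radial direction when the minimum is propagated; neither of these presents a real obstacle.
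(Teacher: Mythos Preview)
Your proposal is correct and takes essentially the same approach as the paper: the paper's proof simply states that checking the sign pattern shows the system matrix is an $M$-matrix, and you have supplied exactly that verification in detail (positive diagonal, non-positive off-diagonal from the upwinding and second differences, weak diagonal dominance at interior rows plus strict dominance at boundary rows, irreducibility via the connected stencil graph). Your additional self-contained minimum-principle argument by contradiction is a sound and welcome elaboration of the same idea.
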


\begin{proof} By checking the sign pattern of the elements in the system matrix, one will see that the system matrix is an $M$-matrix, which guarantees that the inverse matrix is a non-negative matrix. 
\end{proof}

The discrete solution $U$ can be  decomposed along the same lines as the continuous solution. The error in each component is then separately bounded. To this end, we define the discrete regular component $V$ as the solution of 
\begin{subequations}\label{disc-reg}
\begin{eqnarray}
L^N_{r,\theta} V(r_i,\theta _j)&=&f(r_i,\theta _j) , \  \quad  R_1<r_i<R_2,\ 0 < \theta _j < 2\pi; \\
 V(R_1,\theta _j) =v(R_1,\theta _j)&,&    V(R_2,\theta _j) =v(R_2,\theta _j),\quad   0 \leq \theta _j \leq 2\pi; 
\end{eqnarray}
\end{subequations}
and  the two  discrete layer components $W^-, W^+$
as the solutions of the following problems:
\begin{subequations}
\begin{eqnarray}\label{disc-sing1}
L^N_{r,\theta} W^\pm(r_i,\theta _j)&=&0,\   \quad  R_1<r_i<R_2, \ 0 \leq \theta _j < 2\pi; \\
 W^\pm(R_1,\theta _j) =w^\pm(R_1,\theta _j)&,&    
 W^\pm(R_2,\theta _j) =w^\pm(R_2,\theta _j),\   \forall \theta _j . 
\end{eqnarray}
\end{subequations}
All components are defined to be single valued periodic functions on $\overline \Omega _S^{N,M}$.
The next result establishes that the discrete boundary layer components $W^-,W^+$ are
 negligible outside of their respective boundary layer regions.

\begin{theorem} Assume (\ref{assump1}), $M=O(N)$ and $4\max \{ \sigma _*, \sigma ^* \}  < R_2 - R_1$. 
The discrete boundary layer functions  $W^-,W^+$   satisfy the bounds
\begin{subequations}\label{bnds-discreteW}
\begin{eqnarray}
 \vert  W^-(r_i,\theta _j) \vert &\leq& C\Pi _{j=1}^i (1+ \frac{\gamma _*h_j}{2\ve})^{-1};  \\
 \vert  W^+(r_i,\theta _j) \vert &\leq& C\frac{\Pi _{j=i}^N (1+ \frac{\gamma ^*h_j}{2\ve})}{\Pi _{j=1}^N (1+ \frac{\gamma ^*h_j}{2\ve})}+CM^{-1}; 
\end{eqnarray}\end{subequations}
where $h_i:=r_i-r_{i-1}, \ \gamma _*< \alpha  \cos (\theta _*) $ and  $ \gamma ^*< \alpha  \cos (\theta ^*) $.
 Moreover,  there exists  some $\mu _* > 1$ such that $\mu \theta ^*\leq  \mu _*\theta ^*<\frac{\pi}{2} $ and 
\begin{equation}
W^-(r_i,\theta _j) \equiv W^+(r_i,\theta _j) \equiv 0, \quad \forall \theta _j \in    [\mu \theta_*,\pi -\mu _*\theta_*] \cup [\pi+ \mu _*\theta_*,2\pi -\mu \theta_*].
\end{equation}
\end{theorem}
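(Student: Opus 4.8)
The plan is to derive all three assertions from the discrete comparison principle stated above, by exhibiting a short list of discrete barrier mesh functions; the statements are the mesh analogues of the corresponding facts about $w^\pm$ in Theorem~1, so the overall strategy imitates the continuous argument. I would treat them in the order (a) the identical vanishing of $W^\pm$ on the angular sectors, (b) the pointwise bound on $W^-$, (c) the pointwise bound on $W^+$, because (b) and (c) use (a) to control $W^\pm$ on the angular edges of the regions where the radial barriers live.

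For (a): by Theorem~1 and (\ref{bndry-layers}) the continuous layers satisfy $w^\pm\equiv0$ for $\theta\in[\mu\theta_*,\pi-\mu\theta_*]\cup[\pi+\mu\theta_*,2\pi-\mu\theta_*]$ and all $r$, so the data of the discrete problems (\ref{disc-sing1}) vanish identically on these closed $\theta$-sectors on both circles. I would assemble a nonnegative mesh function $\Phi^\pm$ that vanishes on the slightly shrunken sectors $[\mu\theta_*,\pi-\mu_*\theta_*]\cup[\pi+\mu_*\theta_*,2\pi-\mu\theta_*]$, dominates $|W^\pm|$ on every boundary node, and satisfies $L^N_{r,\theta}\Phi^\pm\ge0$ on $\bar\Omega^{N,M}_S$; its ingredients are the radial discrete-exponential factors occurring in (\ref{bnds-discreteW}) and a mesh cut-off in $\theta$ modelled on $\psi_*,\psi^*$ of the Appendix, exploiting that on a fixed $\theta$-neighbourhood of $\pi$ (resp.\ of $0$, with the mirrored radial factor) the coefficient $a(r_i,\theta_j)\cos\theta_j-\ve/r_i$ keeps one sign and is bounded away from $0$. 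The comparison principle then forces $|W^\pm|\le\Phi^\pm$, hence $W^\pm\equiv0$ on the shrunken sectors. The margin $\mu_*>\mu$, which buys several mesh intervals of room once $M=O(N)$, is exactly what lets $\Phi^\pm$ stay a supersolution at the nodes where the cut-off leaves $0$.

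For (b): Theorem~1 gives $|w^-(R_1,\theta_j)|\le C$, $w^-(R_2,\theta_j)=0$, and by (a) $W^-$ already vanishes outside a fixed $\theta$-neighbourhood of $\pi$, where $a\cos\theta_j-\ve/r_i\le-\gamma_*<0$ for any $\gamma_*<\alpha\cos\theta_*$. Set $B^-(r_i):=C\,\Pi_{k=1}^{i}(1+\gamma_*h_k/(2\ve))^{-1}$; it is constant in $\theta$, so the angular terms of $L^N_{r,\theta}$ annihilate it, and the standard simple-upwinding computation on the mesh (\ref{fitted-mesh}) gives $-\ve\delta^2_rB^-+(a\cos\theta_j-\ve/r_i)D^\pm_rB^-\ge0$ at every radial node, including the transition nodes $r=R_1+\sigma_*$ and $r=R_2-\sigma^*$ (here $4\max\{\sigma_*,\sigma^*\}<R_2-R_1$ is used). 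Since $B^-$ dominates $W^-$ on the relevant boundary nodes and on the angular edges (where $W^-\equiv0$ by (a)), the comparison principle yields $|W^-|\le B^-$. Part (c) is identical with $B^-$ replaced by the discrete layer function decaying towards $r=R_2$; the extra $CM^{-1}$ in that barrier is the mesh counterpart of the spatially uniform term $Ce^{-R_2\alpha\cos(\theta^*)(1-\sin(\theta^*))/(2\ve)}$ of (\ref{bnd-w}) together with the contribution carried on the coarse radial mesh, estimated with $\sigma^*=\min\{(R_2-R_1)/4,(2\ve/(\alpha\cos\theta^*))\ln N\}$ and $M=O(N)$, and is needed to dominate $w^+(R_2,\cdot)$.

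The main obstacle is checking that each of these mesh functions really is a discrete supersolution, i.e.\ $L^N_{r,\theta}(\text{barrier})\ge0$ at every mesh point. Two groups of nodes require care: the Shishkin transition nodes $r=R_1+\sigma_*$, $r=R_2-\sigma^*$, where the radial step changes abruptly and the jump must be absorbed by the convective term; and, for (a), the $\theta$-nodes bordering the zero sectors, where the interplay of the $\theta$-diffusion $(\ve/r_i^2)\delta^2_\theta$, the $\theta$-convection $(a\sin\theta_j/r_i)D^\pm_\theta$ and the uniformly signed radial convection $a\cos\theta_j-\ve/r_i$ must leave a nonnegative residual — and it is precisely to pass this sign check that the zero sector is taken strictly inside the continuous one ($\mu_*>\mu$) and that $M=O(N)$ is assumed. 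Once these inequalities are in hand, all three bounds follow from the comparison principle.
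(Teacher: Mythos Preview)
Your overall strategy---discrete barriers and the comparison principle---is the paper's, and your remark that the margin $\mu_*>\mu$ buys room on the $\theta$-mesh is on target. The organisation differs: the paper does not separate (a) from (b)--(c) but builds product barriers $(\text{angular cut-off})\times(\text{radial exponential})$ once and reads off both the decay bound and the vanishing on sectors from a single comparison.

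There is, however, a real gap in your plan. The construction of the angular cut-off is the heart of the argument, and ``a mesh cut-off modelled on $\psi_*,\psi^*$'' does not suffice. At a node $\theta_j$ on the edge of the cut-off's support (say $\psi(\theta_j)=0$, $\psi(\theta_{j-1})>0$, $\psi(\theta_{j+1})=0$ with $\sin\theta_j>0$), the upwinded first difference $D^\pm_\theta\psi$ picks $D^+_\theta\psi=0$, while $\delta^2_\theta\psi=\psi(\theta_{j-1})/K^2>0$; hence $L^N_{r,\theta}(\psi\,Z)=-\frac{\ve}{r_i^2}Z\psi(\theta_{j-1})/K^2<0$ there, and the product is \emph{not} a supersolution. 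The paper resolves this differently for the two layers. For $W^-$ it defines a genuinely discrete cut-off $\Psi_*(\theta_j)$ as the solution of the one-dimensional mesh problem $L^N_{r,\theta}\Psi_*=0$ on a transition interval $(A^M,B^M)$ with $\Psi_*=0,1$ at the ends; this makes the angular residual vanish by construction, and the barrier $\Psi_*(\theta_j)Z^-(r_i)$ gives both the bound and the identical vanishing. For $W^+$ it instead samples the smooth, $\ve$-independent $\psi^*$ on the mesh and adds the correction $CM^{-1}(r_i\cos\theta_j)$; since $L^N_{r,\theta}(r\cos\theta)\ge\alpha/2$, this term absorbs the angular consistency residual. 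That is the true origin of the $CM^{-1}$ in the bound---it is not ``needed to dominate $w^+(R_2,\cdot)$'' (the radial factor $Z^+$ with $Z^+(R_2)=1$ already does that), nor is it a discrete version of the exponentially small constant in (\ref{bnd-w}). Your reorganised route could be made to work if in (a) you adopt a $\Psi_*$-type discrete cut-off for \emph{both} $W^\pm$; then the purely radial barriers in (b)--(c) on the residual sectors would even avoid the $CM^{-1}$ term. But you must specify that construction: the continuous $\psi_*$ is $\ve$-dependent and cannot be restricted to the mesh, and the continuous $\psi^*$ only survives with the $CM^{-1}$ crutch, which spoils the exact vanishing you need for (a).
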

\begin{proof} (i) Let us first establish the bound on $W^+$. Consider the following discrete barrier function
\[
\psi ^*(\theta _j)Z^+(r_i) + CM^{-1}(r_i\cos \theta _j),  \ Z^+(r_i) := \frac{\Pi _{j=i}^N (1+ \frac{\gamma ^*h_j}{2\ve})}{\Pi _{j=1}^N (1+ \frac{\gamma ^* h_j}{2\ve})}; \ \gamma ^*\leq \alpha \cos (\mu \theta ^*) ,
\]
where $\psi ^*$ is the cut-off function defined in (\ref{cut-off}). For any radial mesh,  note the following
\begin{eqnarray*}
D^-_rZ^+(r_i) = \frac{\gamma ^*}{2\ve}Z(r_{i-1}), && D_r^+Z(r_i) = \frac{\gamma ^*}{2\ve}Z^+(r_{i}) = (1+ \frac{\gamma ^*h_i}{2\ve})Z^+(r_{i-1}); \\
-\ve \delta ^2_r Z^+(r_i) &=& - \frac{h_{i}(\gamma ^*)^2}{4\bar h_i\ve}Z^+(r_{i-1}) \geq -\frac{(\gamma ^*) ^2}{2\ve}Z^+(r_{i-1}); \\
-\ve \delta ^2_r Z^+(r_i) +\gamma ^*D^-_rZ^+(r_i)  &\geq& 0; \qquad D^\pm_rZ^+(r_i) >0; \qquad Z^+(R_2)=1.
\end{eqnarray*}
From the definition (\ref{cut-off}) of the cut-off function $\psi ^*$, we have that
\[
\sin \theta _j D^\pm _\theta \psi ^* < 0 \quad \hbox{and}\quad  \delta ^2 _\theta \psi ^*(\theta _j) = (\psi ^*)''(\theta _j) + CM^{-2}.
\]
For all $\theta \in [2\pi -\mu \theta _*, 2\pi) \cup [0, \mu \theta _*]$ and $\ve$ sufficiently small,  using the strict inequality $a > \alpha$ we have that
\begin{eqnarray*}
&& L^N_{r,\theta} \bigl(\psi ^* (\theta _j)Z^+(r_i) + CM^{-1}(r_i\cos \theta _j)\bigr)
\\
&&\geq \psi^* (\theta _j) (-\ve \delta ^2_r Z^+(r_i) +\alpha \cos (\theta) D^-_rZ^+(r_i) ) 
\\
&&\geq  \psi ^*(\theta _j) (-\ve \delta ^2_r Z^+(r_i) +\alpha \cos( \theta _*) D^-_rZ^+(r_i) ) \\
&&\geq  0, \quad \hbox{if} \quad \gamma ^*\leq \alpha \cos (\mu \theta ^*) .
\end{eqnarray*}

(ii) We next establish the bound on $W^-$ within the region where $\cos \theta < 0$ and $\sin \theta \geq 0$. As for the continuous boundary layer function $w^-$,  consider the following discrete barrier function
\[
\Psi _* (\theta _j)Z^-(r_i), \qquad  \hbox{where}\quad  Z^-(r_0)=1, \quad Z^-(r_i) := \Pi _{j=1}^i (1+ \frac{\gamma _*h_j}{2\ve})^{-1}; \ h_i:=r_i-r_{i-1},
\]
where $\gamma _*$ is a parameter to be specified later.  The function $\Psi _* (\theta _j)$ is  constructed as follows:
Let $\mu ^* > \mu$. We identify the angles corresponding to the mesh points
\[
A^M:= \min _j \{ \theta _j | \theta _j \geq \pi - \mu ^*\theta _* \}, \quad B^M:= \max _j \{ \theta _j | \theta _j \leq \pi - \mu \theta _* \},
\]
and assume that $M$ is sufficiently large so that $8\pi M^{-1} < (\mu ^*-\mu) \theta _*$. Then
\begin{eqnarray*}
\Psi _* (\theta _j) =0,  \ \forall \theta _j \in [\frac{\pi}{2}, A^M], \  \Psi _* (\theta _j) =1,  \ \forall \theta _j \in [B^M, \pi];\\
 L^N_{r,\theta} \Psi _* (\theta _j)  =0, \quad \theta _j \in (A^M,B^M).
\end{eqnarray*} 
Note that $L^N_{r,\theta} \bigl(\Psi _* (B^M) \bigr) \geq 0$. Hence, for any radial mesh, note the following
\begin{eqnarray*}
D^-_rZ^-(r_i) &=& -\frac{\gamma _*}{2\ve}Z^-(r_{i}) = -\frac{\gamma _*}{2\ve}(1+ \frac{\gamma _*h_{i+1}}{2\ve})Z^-(r_{i+1}), \\ D_r^+Z^-(r_i) &=& -\frac{\gamma_*}{2\ve}Z^-(r_{i+1}); \\
-\ve \delta ^2_r Z^-(r_i) &=& - \frac{h_{i+1}}{2\bar h_i} \frac{\gamma _*^2}{2\ve}Z^-(r_{i+1}) \geq -\frac{\gamma _*^2}{2\ve}Z^-(r_{i+1}); \\
-\ve \delta ^2_r Z^-(r_i) -\gamma _*D_r^+Z^-(r_i)  &\geq& 0; \qquad D_r^\pm Z^-(r_i) <0; \qquad Z^-(R_1)=1.
\end{eqnarray*}
For all $\theta \in (\pi - \mu ^*\theta _* , \pi) $,  assuming $\ve$ sufficiently small, and using the strict inequality $a > \alpha$ we have that,  
\begin{eqnarray*}
L^N_{r,\theta} \bigl(\Psi _* (\theta _j)Z^-(r_i) \bigr)
&=& \Psi _* (\theta _j)L^N_{r,\theta} \bigl(Z^-(r_i) \bigr) + Z^-(r_i)L^N_{r,\theta} \bigl(\Psi _* (\theta _j) \bigr)\\
&\geq& \Psi _*(\theta _j) (-\ve \delta ^2_r  +a \cos (\theta ) D^+_r ) Z^-(r_i) \\
&\geq& \Psi _* (\theta _j) (-\ve \delta ^2_r Z^-(r_i) +\alpha \cos ( \theta _*) D^+_rZ^-(r_i) ) \\
&\geq& 0, \qquad \hbox{if} \quad \gamma _* \leq \alpha  \cos ( \theta _*)  .
\end{eqnarray*}
Hence, $\vert W^-(r_i,\theta_j) \vert \leq C \Psi _* (\theta _j)Z^-(r_i)$. 
\end{proof}

\begin{theorem}\label{main} Assume the data satisfy (\ref{assump1}) and that $M=O(N)$, then
\[
 \Vert u-\bar U \Vert _{\bar \Omega} \leq C( N^{-1}+M^{-1})  (\ln N)^2 ,
\]
where $u$ is the solution of the continuous problem  and 
$\bar U$ is the bilinear interpolant of the discrete solution $U$, generated by the finite difference operator  on the piecewise-uniform mesh. 
\end{theorem}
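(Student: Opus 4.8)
The plan is to follow the standard Shishkin-mesh decomposition argument, estimating the error in each solution component separately and then combining. First I would write $u - \bar U = (v - \bar V) + (w^- - \bar W^-) + (w^+ - \bar W^+)$, where the bars denote bilinear interpolants of the respective discrete components. Since bilinear interpolation on a rectangular (in $(r,\theta)$) mesh is stable in the maximum norm, it suffices to bound $\Vert u - U\Vert$ at the mesh points and then pay the usual $C(\ln N)$ interpolation penalty in the layer regions (actually $(\ln N)^2$ if both an $r$-layer and a fine-mesh interaction contribute). The key tool throughout is the discrete comparison principle (Theorem 3), so every bound is obtained by exhibiting an explicit barrier function dominating the truncation error.

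Second, I would handle the regular component $V$. Using the derivative bounds $\Vert \partial^{i+j} v / \partial r^i\partial\theta^j\Vert \le C(1+\ve^{2-i-j})$ from Theorem 1, the consistency error $L^N_{r,\theta}(v - V) = L^N_{r,\theta}v - Lv$ is estimated by Taylor expansion: the $\delta^2$ and $D^\pm$ operators applied to $v$ differ from the exact derivatives by terms controlled by $H\Vert v_{rr}\Vert$, $H\Vert v_{rrr}\Vert$, $K\Vert v_{\theta\theta\theta}\Vert$ etc., all of which are $O(H + K) = O(N^{-1} + M^{-1})$ because the third derivatives of $v$ carry at worst a single negative power of $\ve$ which is absorbed: $\ve\cdot\ve^{-1} = O(1)$. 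Then the constant barrier $C(N^{-1}+M^{-1})$ (adjusted by a linear-in-$r$ term to handle the convective sign, as in the $W^+$ argument of Theorem 4) dominates, giving $\Vert v - V\Vert \le C(N^{-1}+M^{-1})$ at mesh points, and this component needs no interpolation penalty since $v$ has bounded-away-from-$\ve^{-1}$ derivatives.

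Third, and this is the bulk of the work, I would bound $w^\pm - W^\pm$ in the two subregions of the Shishkin mesh. Outside the layer region (the coarse part of the mesh), Theorems 1 and 4 tell us both $w$ and $W$ are already $O(N^{-1})$-small (from the $e^{-\dots/\ve}$ factors evaluated at the transition point $\sigma_*$ or $\sigma^*$, using $4\max\{\sigma_*,\sigma^*\} < R_2-R_1$ and $\sigma_* = \frac{2\ve}{\alpha\cos\theta_*}\ln N$ which makes $Z^-(r_{N/4}) \le C N^{-1}$), plus the $CM^{-1}$ remainder term for $W^+$; so $\Vert w^\pm - W^\pm\Vert \le C(N^{-1}+M^{-1})$ there by triangle inequality, and the barrier function from Theorem 4 continues the smallness into the interior. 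Inside the layer region (the fine mesh, width $\sigma$, step $h = 4\sigma/N$), I would estimate the truncation error using the derivative bounds (\ref{bnd-der-w}): $\ve\delta^2_r w$ against $\ve w_{rr}$ costs $C h \ve \Vert w_{rrr}\Vert \le C h \ve^{-2} = C \ve^{-1} N^{-1}\ln N$, and the convective term $C h \ve^{-1}$ similarly, so $|L^N_{r,\theta}(w - W)| \le C \ve^{-1} N^{-1}\ln N$ on the fine mesh, which is dominated by the barrier $C N^{-1}\ln N\, (1 + \text{something})$ — here I would use a barrier of the form $C N^{-1}\ln N\cdot Z^-(r_i)/Z^-(r_{3N/4})$ or a linear ramp, exploiting that $\ve^{-1}h = \frac{4\ln N}{\alpha\cos\theta_* N}$ so that the problematic $\ve^{-1}$ is tied to $\ln N / N$. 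Combining the mesh-point bound $C N^{-1}\ln N$ with the bilinear interpolation error of the exact layer $w$ on the fine mesh — which is $O(h^2\Vert w_{rr}\Vert) = O(h^2\ve^{-2}) = O((N^{-1}\ln N)^2)$, hence $O((N^{-1}\ln N)^2)$ absorbed, or more carefully $O(N^{-1}\ln N)$ after the standard telescoping argument — and the angular interpolation error $O(M^{-2}\Vert w_{\theta\theta}\Vert) = O(M^{-2}\ve^{-1})$ handled via the cut-off structure that confines $w$ to $\theta$-intervals of width $O(1)$ where $w_\theta, w_{\theta\theta}$ are controlled by (\ref{bnd-der-w}) — gives the stated $C(N^{-1}+M^{-1})(\ln N)^2$.

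The main obstacle I anticipate is the interaction between the radial layer and the angular variable: the convective coefficient $a\cos\theta - \ve/r$ in the radial direction degenerates as $\theta \to \pm\pi/2$ (the characteristic points), so the layer width genuinely depends on $\theta$, and one must use the cut-off functions $\psi_*, \psi^*$ (and the fact, from Theorem 1, that $w \equiv 0$ for $\theta$ in the central intervals $[\mu\theta_*, \pi - \mu\theta_*]$ etc.) to restrict attention to angular sectors where $\cos\theta$ is bounded below by $\cos(\mu\theta_*) > 0$, so that $\gamma_* \le \alpha\cos(\mu\theta_*)$ makes the radial barrier genuinely decay. Carrying this sector restriction consistently through the truncation-error estimate — in particular verifying that the cross term $-\frac{a\sin\theta}{r}D^\pm_\theta$ acting on a product barrier $\Psi_*(\theta_j)Z^-(r_i)$ has the right sign, which is exactly what the $\sin\theta_j D^\pm_\theta\psi^* < 0$ computation in the proof of Theorem 4 is designed to supply — is the delicate point; everything else is a routine (if lengthy) Shishkin-mesh truncation-and-barrier bookkeeping exercise.
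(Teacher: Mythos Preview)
Your proposal is correct and follows essentially the same route as the paper: decompose $U-u$ into regular and layer parts, bound $V-v$ by a standard truncation-plus-barrier argument (the paper uses the barrier $C(N^{-1}+M^{-1})(R_2+r_i\cos\theta_j)$), and for each $W^\pm-w^\pm$ split into coarse mesh (both small by the pointwise bounds in Theorems~1 and~3) and fine mesh (truncation error $O(\ve^{-1}(N^{-1}\ln N+M^{-1}))$ handled by a barrier). The only detail worth noting is that on the fine mesh the paper's barrier is specifically the linear ramp times $\cos\theta$, namely $C(r_i-(R_1-\sigma_*))\cos\theta_j\,\ve^{-1}(N^{-1}\ln N+M^{-1})+CN^{-1}$, exploiting $L^N_{r,\theta}\cos\theta_j\geq a\sin^2\theta_j/r_i+O(\ve)+O(K)$; the passage from nodal to global error (whence the extra $\ln N$) is deferred to the companion paper on the disc.
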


\begin{proof}
Let $E:=U-u$ denote the pointwise error. Let us consider the truncation error at all the interior points.
At the transition point $r_i=R_1+\sigma_*, R_2-\sigma ^*$ and for $\theta \in (\mu \theta _*, \pi - \mu \theta _*) \cup (\pi + \mu \theta _*, 2\pi - \mu \theta _*)$, we have 
\[
(\alpha r_i\cos \theta _j -\ve ) <0, \ \hbox{if} \ \cos \theta < 0, \quad \hbox{and} \quad (\alpha r_i\cos \theta _j -\ve ) >0, \ \hbox{if} \ \cos \theta > 0, \]
for $\ve$ sufficiently small.
Hence at each interior mesh point $(r_i,\theta _j)$, we have the truncation error bounds
 \begin{eqnarray*}
&&\vert L^N_{r,\theta} (U-u) (r_i,\theta _j) \vert  = \vert (L_{r,\theta} -L^N_{r,\theta} ) u(r_i,\theta _j)\vert \\
&\leq& C K\ve \Bigl\Vert \frac{\partial ^3 u}{\partial \theta ^3}\Bigr\Vert  + C \ve h_i \Bigl\Vert \frac{\partial ^3 u}{\partial r ^3}\Bigr\Vert + 
 C\min \{ h_i, h_{i+1} \} \Bigl\Vert \frac{\partial ^2 u}{\partial r ^2}\Bigr\Vert + CK  \Vert \frac{\partial ^2 u}{\partial \theta ^2}\Vert.
\end{eqnarray*}
We consider only the case where $\ve$ is sufficiently small so that \[
4\max \{ \sigma _*, \sigma ^* \}  < \min \{ R_2 - R_1, 4R_2 (1- \sin (\theta ^*)) \}, \] as the  alternative case  is easily dealt with by using a classical stability and consistency argument across the entire mesh.

For the regular component,   observe that  we have the following truncation error bounds:
\begin{eqnarray*}
\vert L^N_{r,\theta} (V-v) \vert &\leq& C(N^{-1}  +M^{-1}),\qquad  R_1 < r_i <R_2, 0 \leq \theta _j < 2\pi  .
\end{eqnarray*}
Note that, since $D_\theta^\pm \cos \theta _j = - \sin \theta _j +CK$, we have that
\[
L^N_{r,\theta} (r_i\cos \theta _j) = a(r_i,\theta _j) +O(K) \geq \alpha /2
\]
and, hence,  we can use the discrete barrier function 
\[
C(N^{-1}  +M^{-1})(R_2 +r_i\cos \theta _j)   
\]
to bound the error in the regular component,. 

 Note that $w^-=W^-\equiv 0$, for $\cos \theta \geq 0$ and so we consider the error $w^--W^-$ in approximating the layer component only in the region where $\cos \theta < 0$.  
For $r_i \geq R_1+\sigma _*$, we use the pointwise bounds (\ref{bnd-w}), (\ref{bnds-discreteW}), on the continuous and discrete layer functions, and the argument in \cite[pg.72]{mos} to deduce that
\[
\vert W^--w^- \vert \leq \vert W^-\vert + \vert w^- \vert \leq CN^{-1} , \quad r_i \geq R_1+\sigma _*,\  \frac{\pi }{2} \leq \theta _j \leq \frac{3\pi }{2}.
\] Within the fine mesh, we have the truncation error bound
\begin{eqnarray*}
\vert L^N_{r,\theta} (W^--w^-) \vert \leq  C\frac{N^{-1} \ln N +M^{-1}}{\ve},\quad R_1+\sigma _* > r_i >R_1,\ \frac{\pi }{2} \leq \theta _j \leq \frac{3\pi }{2} \\
(w^--W^-)(r_i, \frac{\pi }{2})=(w^--W^-)(r_i, \frac{3\pi }{2})  =0.
\end{eqnarray*}
Note that
\[
L^N_{r,\theta} \cos \theta \geq \frac{a \sin ^2 \theta _j}{r_i} + O(\ve) +O(K).
\]
Hence, to complete the argument, we use the barrier function
\[
 C(r_i-(R_1-\sigma _*) )(\cos \theta) \frac{(N^{-1} \ln N +M^{-1})}{\ve} +CN^{-1}. 
\]
Finally, we consider the error $W^+-w^+$. 
Away from the outer boundary layer, and  where $\ve $ is sufficiently small, we observe that
\[
e^{\frac{-R_2\alpha \cos  (\theta ^*)(1-\sin (\theta ^*))}{2\ve}}\leq    C N^{-1}.
\]
Proceed as for the other boundary layer function.  The  global error bound follows as in \cite[Theorem 4]{circle}.
\end{proof}

\section{Numerical Results} 

In this final section, we examine the performance of the numerical applied to two sample problems. In both cases, the exact solutions is not known and we estimate both the errors and the rates of convergence using the double-mesh method \cite{fhmos}. 
We compute the maximum pointwise global two--mesh differences $\bar D^N_\ve$ and from these values the parameter--uniform maximum global pointwise two--mesh differences
		$\bar D^N$, defined respectively, as follows
    \[
    \bar D^N_\ve := ||\overline{U}^N-{\overline U}^{2N} ||_{\Omega^N \cup \Omega^{2N}, \infty}, \qquad \bar D^N:= \max_{\ve \in R _{\ve}} \bar D^N_\ve, \quad  R_\ve := \{ 2^{-j}: j=0,1, \ldots 20 \}; 
     \]
 where ${\overline U}^{N}$ is the bilinear interpolant of $U^{N}$, which is the numerical solution computed on the mesh $\Omega ^N$. 
Approximations $\bar p^N_\ve$ to the global  order of convergence  
and, for any particular value of  $N$, approximations to the parameter--uniform order of
global convergence $\bar p^N$ are defined, respectively,  by
\[
   \bar p^N_\ve := \log _2 \frac{\bar D_\ve^N}{\bar D_\ve ^{2N}} \quad\hbox{and} \quad   \bar p^N:= \log _2 \frac{\bar D^N}{ \bar D^{2N}}. 
  \]

{\bf Example 1}
In order to satisfy the main assumption (\ref{assump1}), we introduce the piecewise quadratic function
\[
Q(y):= \qquad  \begin{array}{cccc}  \frac{4}{(y-(R_1+\delta))(R_2-\delta -y)}{(R_2-R_1)^2}, & \hbox{for} & y \in (R_1+\delta, R_2 -\delta) \\ \\
 \frac{((R_1-\delta)^2-y^2)}{R_1^2},& \hbox{for} & y \in (-R_1+\delta, R_1 -\delta) \\ \\
\frac{4}{(y+(R_2-\delta))(R_1+\delta +y)}{(R_2-R_1)^2},& \hbox{for} &  y \in (-R_2+\delta, -R_1 -\delta) \\ \\
0 & \hbox{otherwise}  & 
\end{array}.
\]
Then we  consider  problem (\ref{cont-prob}), where
\begin{subequations}\label{ex2}
\begin{eqnarray}
R_1=1, R_2=4, \quad a(x,y) =1+ \frac{x^2 y^2}{16} \geq 1, \quad g \equiv 0; \\
f(x,y) = (1+x^2) (Q(y))^2, \quad \delta =0.2. 
\end{eqnarray}
\end{subequations}
For this particular example, the Shishkin transition points are taken to be
\[
\sigma _*= \min \{0.75 , \frac{2R_1\ve \ln N}{\sqrt{\delta (2R_1-\delta)}}   \} \quad \hbox{and} \quad   \sigma ^*
=\min \{0.75 , \frac{2R_1\ve \ln N}{\sqrt{\delta (2R_2-\delta)}} \}.
\]

\begin{table}[ht!]
\centering\small
\begin{tabular}{|c| c c c c c c c|}
\hline
\multicolumn{8}{|c|}{$p^{N}_\varepsilon$}\\[3pt]
\hline
$\bf{\varepsilon}\backslash N$&\bf{8}&\bf{16}&\bf{32}&\bf{64}&\bf{128}&\bf{256}&\bf{512}\\[3pt]
\hline 
$\bf{2^{-0}}$&1.5036    & 2.3788   &  1.4118   &  1.2447  &   1.1511  &   1.0833    & 1.0426\\
 $\bf{2^{-2}}$   &1.2593  &   1.2347  &   0.8853  &   0.9619    & 0.9895  &   0.9926  &   0.9968\\
 $\bf{2^{-4}}$   &0.8712  &   0.6946  &   0.5298   &  0.7706   &  0.8714   &  0.9270   &  0.9637\\
 $\bf{2^{-6}}$   &0.3943   &  1.2135   &  0.3763    & 0.5286    & 0.6492  &   0.7791   &  0.8467\\
  $\bf{2^{-8}}$&  0.2447    & 1.3289    & 0.2918   &  0.4527    & 0.5838   &  0.7637   &  0.8369\\
  $\bf{2^{-10}}$&   0.2026  &   1.3220   &  0.3055  &   0.4185    & 0.5776  &   0.7556  &   0.8354\\
  $\bf{2^{-12}}$&  0.1920    & 1.3027  &   0.3261  &   0.4095  &   0.5765   &  0.7542   &  0.8346\\
   $\bf{2^{-14}}$&  0.1894   &  1.2974    & 0.3316   &  0.4072   &  0.5763  &   0.7529   &  0.8352\\
   $\bf{2^{-16}}$& 0.1887  &   1.2961  &   0.3330   &  0.4066  &   0.5762  &   0.7525  &   0.8353\\
   $\bf{2^{-18}}$& 0.1886   &  1.2957   &  0.3334   &  0.4065   &  0.5762  &   0.7525   &  0.8354\\
   $\bf{2^{-20}}$& 0.1885   &  1.2957  &   0.3335  &   0.4064  &   0.5762   &  0.7524   &  0.8354\\
 \hline
$\bf{p^{N}}$& 0.1885 &   1.2957  &  0.3335   &  0.4064  &   0.5762  &   0.7524   &  0.8354\\
\hline
\end{tabular}
\caption{Computed double-mesh global orders for \eqref{ex2} for some sample values of ($N$,$\varepsilon$).}
\label{Tab2}
\normalsize
\end{table}
A plot of a typical computed solution and the associated approximate error are given in Figure \ref{fig:1} and Figure \ref{fig:2}. Boundary layers are visible at all parts of the outflow boundary. 
The global orders of convergence, given in Table 1, indicate that the method is parameter-uniform for this problem.

\begin{figure*}
  \includegraphics[width=0.8\textwidth]{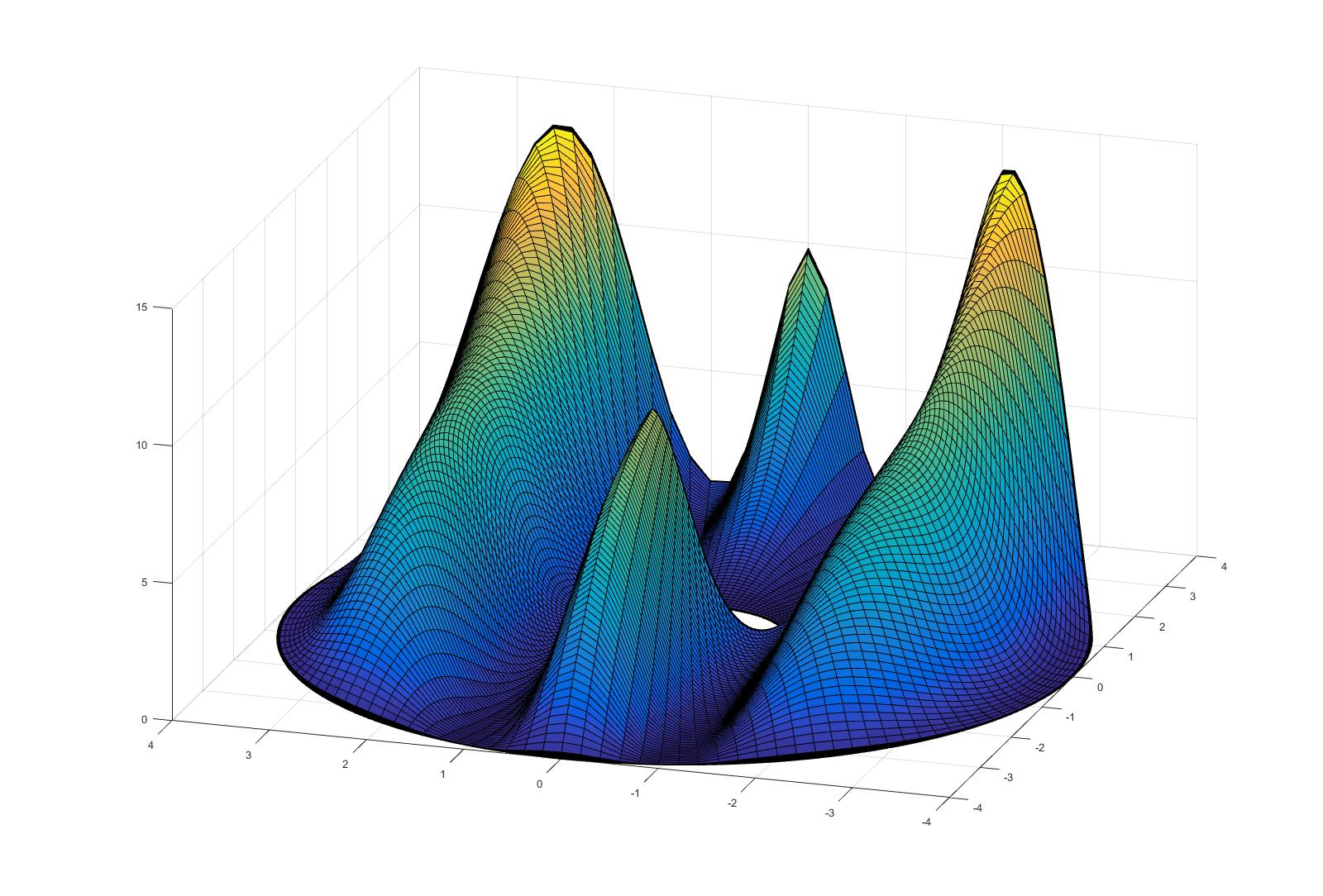}
\caption{Computed solution $\bar U^{128} $  for Example \eqref{ex2}  with $\varepsilon=2^{-10}$}
\label{fig:1}       
\end{figure*}

\begin{figure*}
 \includegraphics[width=0.8\textwidth]{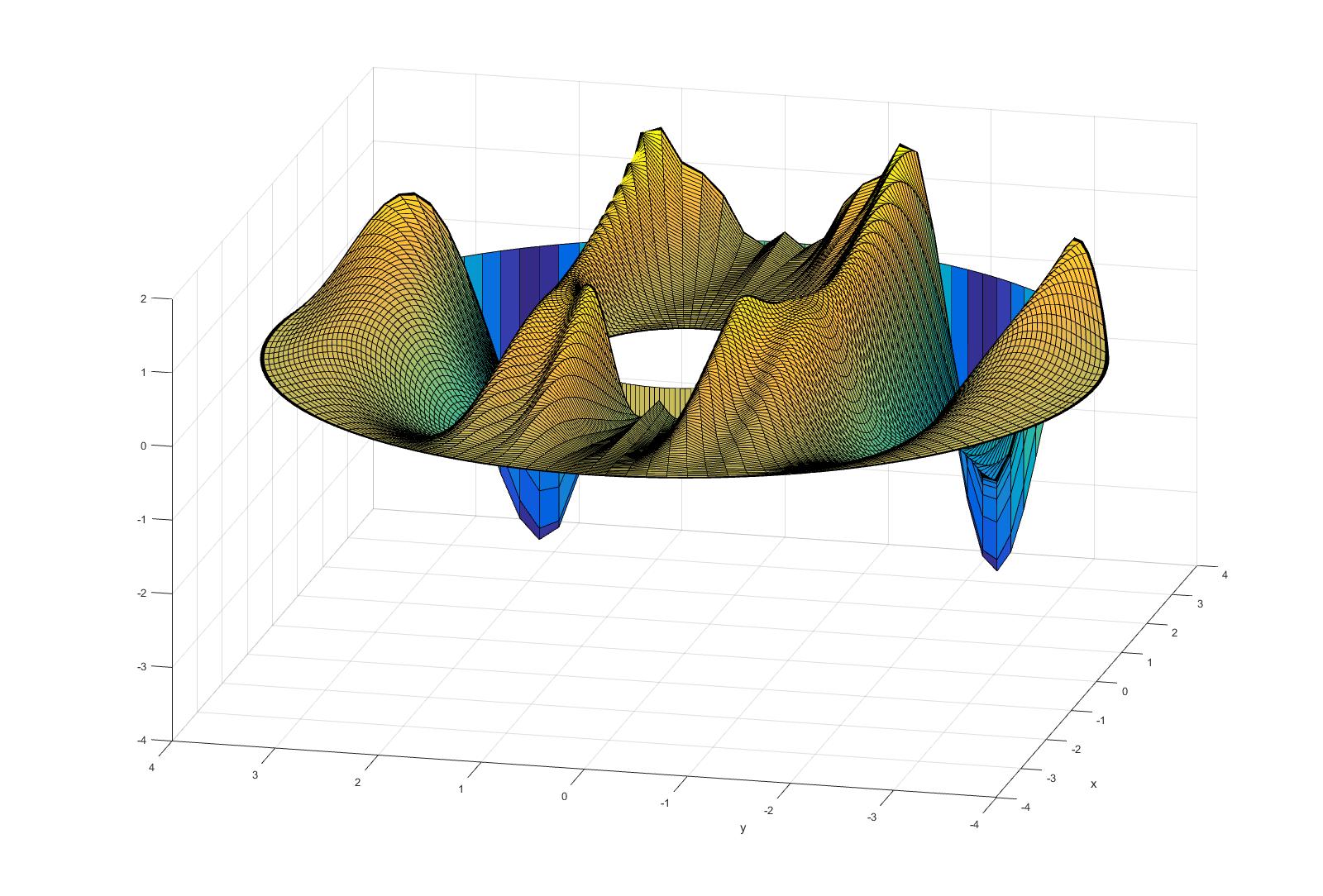}
\caption{Approximate error $\bar U^{128} - \bar U^{1024}$ for Example \eqref{ex2}  with $\varepsilon=2^{-10}$}
\label{fig:2}       
\end{figure*}

The construction of a Shishkin mesh (\ref{fitted-mesh}) is motivated by simplicity and  the objective to be parameter-uniform for a class of problems of the 
form (\ref{cont-prob}). From the pointwise upper bound on the layer component (\ref{bnd-w}) we see that the widths of the boundary layers vary with the angle $\theta$. The layer is the most thin when $\theta =0$. The mesh (\ref{fitted-mesh}) is designed  so as to encompass all angles where the boundary layer is expected to be non-zero and hence the mesh is linked to the widest angle $\theta _*$ and $ \theta ^*$ of the relevant boundary layers. In the next example we construct  a test problem, which only has an outer boundary layer, with the maximum amplitude occurring at $\theta =0$.  Moreover, the fitted mesh located around the inner boundary is not required for such a problem. Nevertheless, we have not optimized the mesh to this particular problem, as we are interested in the performance of the numerical method for a class of problems. 

{\bf Example 2}
Consider  problem (\ref{cont-prob}), with the particular choices of 
\begin{subequations}\label{ex3}
\begin{eqnarray}
R_1=1, R_2=4, \ a(x,y) =(1+0.125y^2)(1.5-0.25x) \geq 0.5, \\ f \equiv 0, \quad g(\theta)=g(2\pi-\theta), \\ 
g(\theta) = \Bigl(1-\frac{4\theta}{\pi}  \Bigr)^3\bigl(\frac{96\theta ^2}{\pi ^2}  + \frac{12 \theta }{\pi}  +1\bigr),\ \theta \in (0,\frac{\pi}{4}), \ g\equiv 0, \ \hbox{otherwise} .
\end{eqnarray}\end{subequations}
In this problem, $ g \in C^2(\Gamma _4), \ f \equiv 0$ and  the reduced solution $\tilde v_0 ^-\equiv 0$.  In this particular case,  the outer boundary layer  will only be significant when  $\vert y \vert \leq R_1 \sin \theta _*, \theta _*=\pi/4$. Hence the layer width at the outer outflow boundary will be determined by
\[
\cos \theta ^* = \sqrt{\frac{R^2_2- R_1^2 \sin ^2 \theta _*}{R^2_2}}.
\]
Hence, for this particular problem  the Shishkin transition points are taken to be 
\[
\sigma _*= \min \{0.75 , 4\sqrt{2}  \ve \ln N \} \quad \hbox{and} \quad   \sigma ^*
=\min \{0.75 , \frac{16 \ve \ln N}{\sqrt{15.5}} \}.
\]
  
	\begin{table}[ht!]
\centering\small
\begin{tabular}{|c| c c c c c c c|}
\hline
\multicolumn{8}{|c|}{$p^{N}_\varepsilon$}\\[3pt]
\hline
$\bf{\varepsilon}\backslash N$&\bf{8}&\bf{16}&\bf{32}&\bf{64}&\bf{128}&\bf{256}&\bf{512}\\[3pt]
\hline
 $\bf{2^{-0}}$&0.3183  &  2.0365   & 1.6967  &  1.5826  &  0.9896  &  0.9934 &   0.9968\\
  $\bf{2^{-2}}$&  0.4879  &  1.4344  &  0.9647  &  0.9442  &  1.0054 &   1.0115 &   1.0049\\
  $\bf{2^{-4}}$&  0.3538 &   0.2261 &   0.3908  &  0.6759 &   0.9894  &  0.9307  &  0.9764\\
  $\bf{2^{-6}}$&  0.3796 &   0.1162&    0.1830  &  0.2790  &  0.6024  &  0.8788  &  0.7976\\
  $\bf{2^{-8}}$&  0.2343  &  0.0858  &  0.2023 &   0.3064 &   0.6111 &   0.8883 &   0.7993\\
   $\bf{2^{-10}}$& 0.1933 &   0.0762  &  0.2099 &   0.3146 &   0.6226  &  0.8826  &  0.7912\\
  $\bf{2^{-12}}$&  0.1829 &   0.0737  &  0.2128  &  0.3159  &  0.6274 &   0.8789 &   0.7854\\
    $\bf{2^{-14}}$&  0.1803 &   0.0731  &  0.2136  &  0.3160  &  0.6288  &  0.8776  &  0.7839\\
     $\bf{2^{-16}}$&   0.1796  &  0.0729 &   0.2138 &   0.3161 &   0.6291  &  0.8773  &  0.7835\\
    $\bf{2^{-18}}$&  0.1794 &   0.0729 &   0.2139  &  0.3161  &  0.6292  &  0.8772 &   0.7834\\
			    $\bf{2^{-20}}$&    0.1794  &  0.0729 &   0.2139  &  0.3161  &  0.6292 &   0.8771  &  0.7834\\
 \hline 
  $\bf{p^{N}}$&  0.1794  &  0.0729  &  0.2139  &  0.3161 &   0.6292  &  0.8771 &   0.7834\\
\hline
\end{tabular}
\caption{Computed double-mesh global orders for \eqref{ex3} for some sample values of ($N$,$\varepsilon$)}
\label{Tab3}
\normalsize
\end{table}

\begin{figure}
\includegraphics[width=0.8\textwidth]{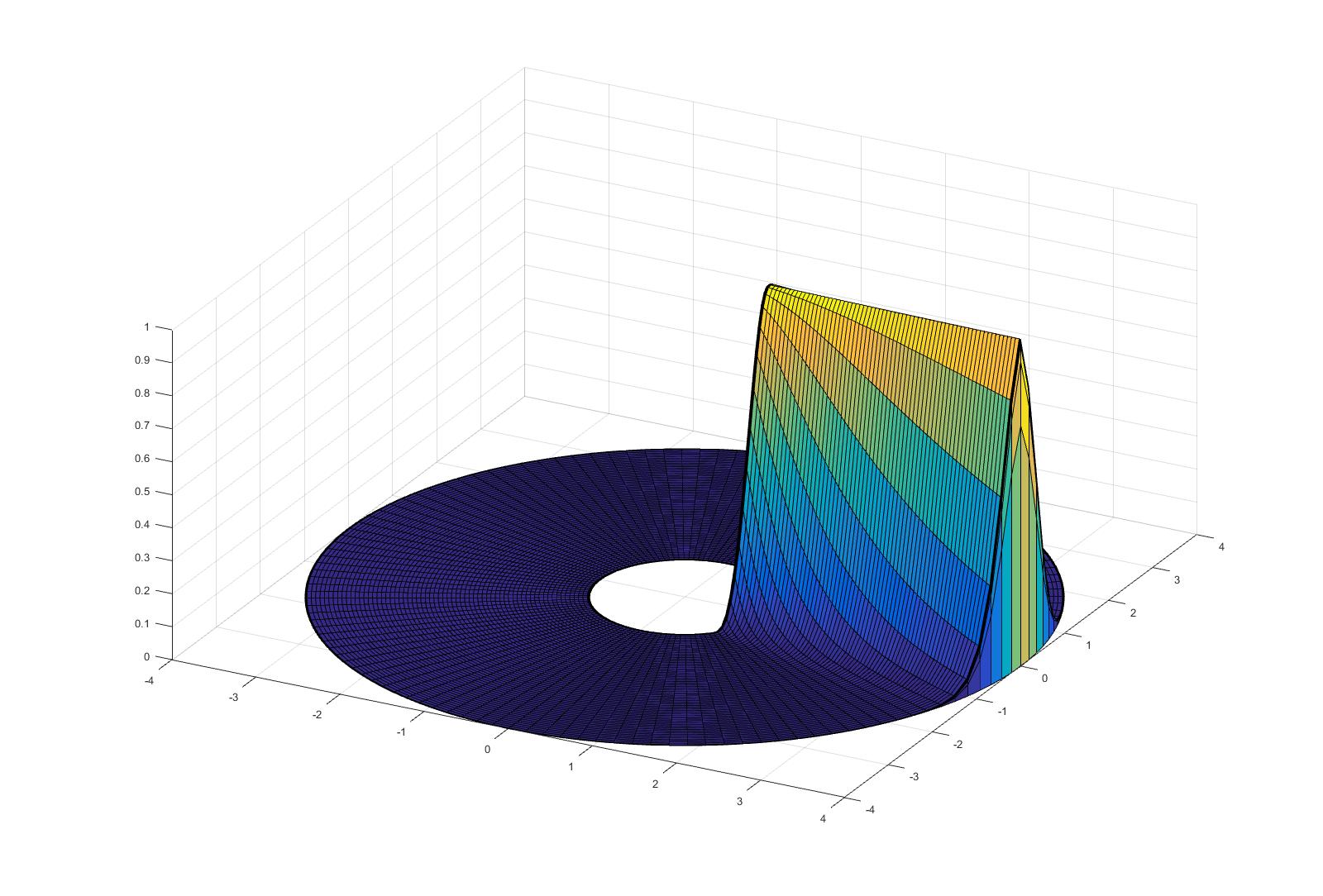}
\caption{Computed solution $\bar U^{128} $  for Example \eqref{ex3}  with $\varepsilon=2^{-10}$}
\label{fig:3}       
\end{figure}

\begin{figure}
\includegraphics[width=0.8\textwidth]{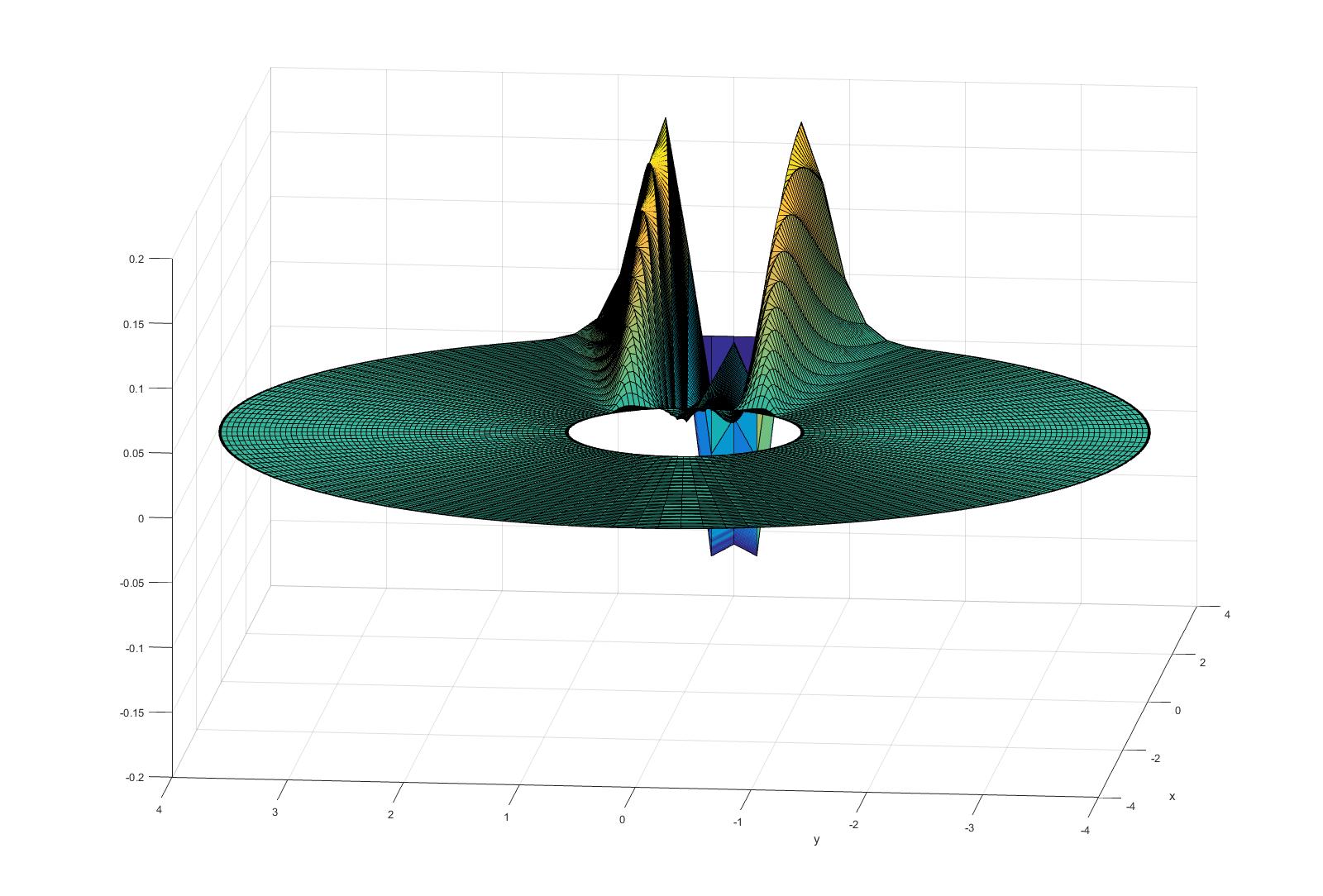}
\caption{Approximate error $\bar U^{128} - \bar U^{1024}$ for Example \eqref{ex3}   with $\varepsilon=2^{-10}$}
\label{fig:4}       
\end{figure}

   A plot of the approximate error in Figure  \ref{fig:4} demonstrates that the largest error is occurring at the outflow.    
 The global orders of convergence presented in both Tables 1 and 2 are in line with 
the theoretical error bound established in Theorem 4.

\section{Appendix: Proof of Theorem 1} 

(i) Let us first consider the upwind  region, where $\cos \theta <0$. We  construct a smooth non-negative cut-off function \[ \psi _*(\theta );[0,2\pi] \rightarrow [0,1] \] with the following properties
\begin{subequations}\label{cut-off-before}
\begin{eqnarray}
0 \leq \psi _*(\theta ) \leq 1,\quad && \psi _*(\pi-\theta ) =\psi _*(\pi+ \theta ), \ \theta \in [0,\pi];\\
\psi _* \equiv 1,\ &&  \hbox{for all}\ \theta \in    [\pi-\theta_*,\pi], \\ \psi  _* \equiv 0, \ && \theta \in   [0,\pi - \mu \theta_*], \ \theta _* < \mu \theta _* < \pi/2 ;\\
\psi _ * '(\theta ) \geq 0,&&  \hbox{for all} \ \theta \in [0,\pi/2].
\end{eqnarray}
We confine the discussion to the region where $\sin \theta  \geq 0$. By symmetry, an analogous analysis can be performed in the region where $\sin \theta <0$.  
In addition to the above properties, the cut-off function will be constructed so that it rapidly changes from the value of $0$ to $1$ over the subinterval 
$[\pi - \mu \theta_*, \pi -  \theta_*]$. To construct this function, we  initially define an associated function $v$,  as the solution of the following singularly perturbed boundary value problem 
\begin{eqnarray*}
- \ve ^2 v'' - \ve  R_1 \Vert a \Vert v' +C_* v &=&0, \quad \pi - \mu _2 \theta _* < \theta < \pi - \mu _1 \theta _* ;\quad  1 < \mu _1 < \mu _2 < \mu;\\
v (\theta ) =0, \ 0 \leq \theta \leq \pi - \mu _2 \theta _*&&  v(\theta ) =1 , \pi - \mu _1 \theta _* \leq \theta \leq \pi;  
\end{eqnarray*}  
where $C_*$ is a positive constant, which is constrained by an upper bound below.  In passing, we note that this function $v$ has a boundary layer to the left of $\theta = \pi - \mu _1 \theta _*$. 
By using the maximum principle, we see that $0 < v \leq 1, v' >0$.

 Finally, we can now specify the cut-off function, with all of the required properties:
\[
\psi _* (\theta) : = \int _{y=\theta -\mu _3 \theta _*}^{\theta +\mu _3 \theta _*}  \eta (\theta -y) v(y) \ dy = \int _{s= -\mu _3 \theta _*}^{+\mu _3 \theta _*}  \eta (s) v(\theta -s) \ ds;
\]
where $\mu _3$ is chosen sufficiently small so that $1 < \mu _1 - \mu _3 < \mu _1 + \mu _3 < \mu _2 - \mu _3 < \mu _2 + \mu _3 < \mu$ and the function $ \eta$ is a mollifier {\footnote{For example we could have
\[
\eta (s) = C e^{-\frac{(\mu _3 \theta _*)^2}{(\mu _3 \theta _*)^2 -\vert s \vert^2} },\ s \in (-\mu _3 \theta _*, \mu _3 \theta _*), \qquad \int _{s=-\mu _3 \theta _*}^{\mu _3 \theta _*} \eta (s) ds =1.
\]}
 } with support $[-\mu _3 \theta _*, \mu _3 \theta _*]$. Note also that
\begin{eqnarray}
- \ve ^2 \psi _*'' - \ve  R_1 \Vert a \Vert \psi _*' +C_* \psi _* &=&0, \quad \pi -  \mu \theta _*   < \theta < \pi - \theta _* ; \\
\psi _* (\theta ) =0, \ 0 \leq \theta \leq \pi -  \mu\theta _* &&  \psi _*(\theta ) =1 , \pi - \theta _*  \leq \theta \leq \pi.
\end{eqnarray}
\end{subequations}
Let us now consider the following  potential barrier function 
\begin{equation}\label{barrier}
B^-(r,\theta ) := \psi _*(\theta) E_-(r,\theta),  \quad E_-(r,\theta) :=e^{\frac{\alpha \kappa \cos (\theta ) (r-R_1)}{\ve}},\ 0< \kappa < 1,
\end{equation}
where $\kappa$ is arbitrary. 
Note the following expressions for the partial derivatives of the barrier function $B^-$:
\begin{eqnarray*}
B^-_\theta &=& -\frac{\alpha \kappa \sin (\theta ) }{\ve} (r-R_1) B^- + \psi _*'(\theta) E_-; \\
B^-_{\theta \theta} 
&=& -\bigl( \alpha \frac{\kappa \cos (\theta) }{\ve} (r-R_1) - \frac{\alpha ^2 \kappa ^2 \sin ^2 (\theta) }{\ve ^2} (r-R_1)^2 \bigr) B^- \\
&& \qquad + (\psi _*''(\theta) -2\psi _*'(\theta ) \frac{\alpha \kappa \sin(\theta) }{\ve} (r-R_1)) 
E_-; \\
B^-_r &=& \frac{\alpha \kappa \cos (\theta) }{\ve}  B^-,   \qquad
B^-_{rr} = \frac{\alpha ^2\kappa ^2 \cos ^2 (\theta) }{\ve ^2}  B^-.
\end{eqnarray*}
Combining these expressions for the derivatives, we deduce that  
\begin{eqnarray*}
&&LB^-= -\frac{\ve}{r^2} B^-_{\theta \theta} -\ve B^-_{rr}+  (\alpha\cos  (\theta)  -\frac{\ve}{r})B^-_r -\alpha \frac{\sin (\theta)}{r} B^-_\theta
 = \frac{\alpha ^2\kappa B^-}{\ve}T_1 + T_2 E_-; \\ \\
\hbox{where} \quad T_1
&& =
\sin ^2 (\theta) (1- \frac{R_1}{r}) (1-\kappa (1- \frac{R_1}{r})) + \cos ^2 (\theta) (1-\kappa ) -\ve \frac{R_1}{\alpha r^2} \vert \cos \theta \vert
\\
&&\geq   \cos ^2 (\theta) (1-\kappa ) -\ve \frac{\vert \cos \theta\vert }{\alpha R_1} \\ \\
\hbox{and} &&T_2 \geq \frac{-\ve \psi _*'' - \alpha R_1 \psi _* '}{R_1^2}. 
\end{eqnarray*}
Hence, the proposed barrier function $B^-$ satisfies the inequality
\begin{eqnarray*}
-\frac{\ve}{r^2} B^-_{\theta \theta} -\ve B^-_{rr}+  (\alpha\cos  (\theta)  -\frac{\ve}{r})B^-_r -\alpha \frac{\sin (\theta)}{r} B^-_\theta 
\geq
\frac{-\ve ^2\psi _*'' - \alpha R_1 \ve \psi _* '+ \bigl(\alpha R_1 \cos \theta (1-\kappa)\bigr) ^2\kappa  \psi _*}{\ve R_1^2}   E_-.
\end{eqnarray*}
In addition, this potential barrier function also satisfies
\begin{eqnarray*}
\cos \theta B^-_r- \frac{\sin \theta}{r}B^-_\theta & \geq&
 \frac{ \alpha \kappa}{\ve }(\cos ^2 \theta  + \frac{(r-R_1) \sin ^2 \theta }{r}  ) B^- -  \frac{  \sin \theta }{r }\psi_*' E_-\\
& \geq& \frac{ \alpha \kappa}{\ve }(1  - \frac{R_1\sin ^2 \theta }{r}  ) B^-  -  \frac{ 1 }{R_1 }\psi_*' E_-\\
& \geq& \bigl( \frac{ \alpha \kappa}{\ve }( \cos ^2 \theta )\psi_*  -  \frac{ 1}{R_1 }\psi_*' \bigr) E_- 
\\
\\&=& \bigl(  \alpha \kappa (R_1\cos  \theta )^2\psi_*  -  \ve  R_1 \psi_*' \bigr) \frac{E_-}{\ve R_1^2}.
\end{eqnarray*}
Hence, using the properties (\ref{cut-off-before}) of the cut-off function $\psi _*(\theta)$, with the zero order coefficient such that $C_* <  \bigl( \alpha R_1 \cos \theta _*(1-\kappa)\bigr) ^2\kappa$, we have that,  for any $0 < \kappa < 1, a > \alpha$ and $\ve $ sufficiently small, 
\begin{eqnarray*}
L B^-(r,\theta ) &\geq&  \frac{-\ve ^2\psi _*'' - \Vert a \Vert R_1 \ve \psi _* '+ \bigl(\alpha R_1 \cos \theta (1-\kappa)\bigr) ^2\kappa  \psi _*}{\ve R_1^2}   E_-\\
&\geq&  \frac{ \bigl(\alpha R_1 \cos \theta _* (1-\kappa)\bigr) ^2\kappa -C_*}{\ve R_1^2}   \psi _* E_-  \geq 0.  
\end{eqnarray*}
Hence, the function $B^-$ is indeed a barrier function for $w^-$. 

(ii) Let us next consider the region where $ \cos \theta > 0$. The cut-off function  \[ \psi ^*(\theta ): [0,2\pi] \rightarrow [0,1] \] is constructed so that it has the following properties:
\begin{subequations}\label{cut-off}
\begin{eqnarray}
0 \leq \psi ^*(\theta ) \leq 1, \quad
\vert (\psi ^*)'' (\theta ) \vert \leq C \vert \psi ^* (\theta) \vert, &&  \hbox{for all} \ \theta \in [0,2\pi);\\ 
\psi ^*(\pi-\theta) = \psi ^*(\pi +\theta),&& \hbox{for all} \ \theta \in (0,\pi];\\
\psi ^* \equiv 1,  \theta \in    (0,\theta^*), &&
\psi  ^* \equiv 0, \ \theta \in    (\mu\theta ^*, \pi ],\\
(\psi^* )'(\theta ) \leq 0,&&  \hbox{for all} \ \theta \in [0,\frac{\pi}{2}). 
\end{eqnarray}
\end{subequations}
Note that this cut-off function is defined independently of $\ve$ (unlike the cut-off function $\psi _*$). 
By symmetry considerations, we will again confine the discussion to the region where $\sin \theta \geq 0$. 

Consider the following  preliminary barrier function 
\[
B^+(r,\theta ) := \psi ^*(\theta) E_+(r,\theta) , 0< \kappa <1, \ E_+(r,\theta):= e^{-\frac{\alpha \kappa \cos (\theta ) (R_2-r)}{\ve}}.
\]
Note the following expressions for the derivatives of the barrier function $B^+$:
\begin{eqnarray*}
B^+_\theta &=& \frac{\alpha \kappa \sin (\theta ) }{\ve} (R_2-r) B^+ + (\psi ^*)'(\theta) E_+; \\
B^+_{\theta \theta} 
&=& \bigl( \alpha \frac{\kappa \cos (\theta) }{\ve} (R_2-r) + \frac{\alpha ^2 \kappa ^2 \sin ^2 (\theta) }{\ve ^2} (R_2-r)^2 \bigr) B^+; \\
&& \quad + ((\psi ^*)''(\theta) +2(\psi ^*)'(\theta ) \frac{\alpha \kappa \sin(\theta) }{\ve} (R_2-r)) 
E_+; \\
B^+_r &=& \frac{\alpha \kappa \cos (\theta) }{\ve}  B^+,   \quad
B^+_{rr} = \frac{\alpha ^2\kappa ^2 \cos ^2 (\theta) }{\ve ^2}  B^+.
\end{eqnarray*}
Observe that,  $\cos ^2 \theta \geq \cos ^2 \theta ^* >0, \ 0 \leq \theta \leq \theta ^* < \pi/2$, and
\begin{eqnarray*}
&&-\frac{\ve}{r^2} B^+_{\theta \theta} -\ve B^+_{rr}+ \alpha (\cos  (\theta)  -\frac{\ve}{\alpha r})B^+_r -\alpha \frac{\sin (\theta)}{r} B^+_\theta \\
&& \geq  
\frac{\alpha ^2\kappa }{\ve} \Bigl( \cos ^2 (\theta) (1-\kappa )-\sin ^2 (\theta)\frac{(R_2-r)}{r} (1+\kappa \frac{(R_2-r)}{r})  + O(\ve ) \Bigr) B^+
\\
&& \geq 
\frac{\alpha ^2\kappa }{\ve} \Bigl( \cos ^2 (\theta)-\sin ^2 (\theta)\frac{(R_2-r)}{r} -\kappa \bigl(\cos ^2 (\theta) + \sin ^2 (\theta)\frac{(R_2-r)^2}{r^2} + O(\ve )\Bigr) B^+
\\
 &&= 
\frac{\alpha ^2\kappa }{\ve} \cos ^2 (\theta)\Bigl(1 -\tan ^2 (\theta)A(r) -\kappa \bigl(1 + \tan ^2 (\theta)A^2)  + O(\ve ) \Bigr) B^+, \\
&& \hbox{where} \quad  A(r) := \frac{R_2}{r} -1 \geq 0, \\
 &&\geq  
\frac{\alpha ^2\kappa }{\ve} \cos ^2 (\theta)\Bigl(1 -\kappa - \tan ^2 (\theta ^*)A\bigl(1 + \kappa A  )+ O(\ve) \Bigr) B^+, \ \forall \theta \leq \theta _*, \\
 &&=  
\frac{\alpha ^2\kappa }{\ve} \cos ^2 (\theta)\Bigl(\frac{1 -\kappa}{\tan ^2 (\theta ^*)} - A\bigl(1 + \kappa A  )\Bigr)\tan ^2 (\theta ^*) + O(\ve ) \Bigr) B^+.
\end{eqnarray*}
Moreover,
\[
(a-\alpha) ((\cos \theta )B^+_r- \frac{\sin \theta}{r}B^+_\theta) \geq 
 \frac{(a-\alpha) \alpha \kappa}{\ve }(1- \frac{R_2  }{r}\sin ^2 \theta   ) B^+ . 
\]
Note that,
\[
A'(r) <0, \ A''(r) > 0, \quad \hbox{and} \quad \frac{R_2-R_1}{R_1} \geq A(r) \geq 0. \]
Observe further that
\begin{subequations}\label{star}
\begin{equation}
\hbox{if} \quad \kappa \leq 0.5\ \hbox{and} \  \frac{R_2}{r} < \frac{1}{ \sin(\theta ^*)} .
\end{equation}
then
\begin{eqnarray*}
\frac{1 -\kappa}{\tan ^2 (\theta ^*)} - A(1 + \kappa A) &\geq & \frac{1}{2} \bigl( \frac{1}{\tan ^2 (\theta ^*)} - (A^2+2A)\bigr) \\
 &= & \frac{1}{2}\Bigl( \bigl( \frac{1}{ \sin (\theta ^*)} - (1+A)\bigr) \bigl( \frac{1}{ \sin (\theta ^*) } + (1+A)\bigr)\Bigr)>0.
\end{eqnarray*}
Consider the case when $r > R_2 \sin  \theta ^*$ (which corresponds to $A< \frac{1}{\sin \theta ^*}-1$) then for all $\theta \in (0, \theta ^*),   0 < \theta ^* < \pi/2$
\begin{equation}
(1- \frac{R_2  }{r}\sin ^2 \theta   ) \geq (1- \sin  \theta ^* ) >0.
\end{equation}
\end{subequations}
Hence, the function $B^+$ is a barrier function in the sub-region where $r > R_2 \sin  \theta ^*, \cos \theta >0, \ \theta \in (0, \theta ^*)$. Note that the inequalities (\ref{star}) are strict inequalities. Hence, we can enlarge the sub-region  so as to include $\theta \in (0,  \mu\theta ^*)$.

We next  consider the region $R_1 \leq r \leq R_2 \sin \mu \theta ^*, \ \theta \in (0, \mu \theta ^*)$.
Note first that 
\[
L(r\psi ^* (\theta)\cos \theta ) =  a \psi ^* -(a \cos \theta -2\ve r^{-1}) \sin \theta \psi ^* _\theta - \ve r^{-1}\cos \theta (\psi ^*) _{\theta \theta}  > \alpha \psi ^*, \]
and
\begin{eqnarray*}  \vert B^+(r,\theta ) \vert \leq  \psi ^* (\theta) e^{-\frac{R_2\alpha \kappa(1- \sin (\mu \theta ^*) \cos  (\mu \theta ^*)}{\ve}}
\leq  \psi ^* (\theta) e^{\frac{-R_2\gamma _1}{2\ve}},\quad
\gamma _1 := \alpha  \cos  (\mu \theta ^*) (1-\sin (\mu \theta ^*)).
\end{eqnarray*}
Use the following composite  barrier function
\[
B^+(r,\theta) +  C e^{\frac{-R_2\gamma _1}{2\ve}}(\psi ^* (\theta) r\cos \theta ) 
\]
to bound the boundary layer function $w^+$. 

(iii) From the crude bounds (\ref{crude}) on the derivatives, we can  establish the  bounds \[ 
\Bigl \Vert \frac{\partial ^{i+j} w}{\partial  r^i \partial \theta ^j} \Bigr \Vert  \leq C \ve ^{-i-j}.
\]
We next  improve on these bounds in the angular direction. 
Consider the following expansions of the layer components:
\begin{eqnarray*}
w^-(r,\theta) = w^-(R_1,\theta)e^{\frac{a(R_1,\theta) \cos (\theta ) (r-R_1)}{\ve}}  +\ve  z^-(r,\theta),\quad \cos \theta <0;\\
 w^+(r,\theta) = w^+(R_2,\theta)e^{-\frac{a(R_2,\theta) \cos (\theta ) (R_2-r)}{\ve}} +\ve  z^+(r,\theta),\quad \cos \theta > 0,
\end{eqnarray*}
where by virtue of assumption (\ref{assump1}), the inequalities
\[ \vert w^-(R_1,\theta) \vert \leq C \vert \cos \theta \vert   \quad \hbox{and} \quad \vert w^+(R_2,\theta) \vert \leq C \vert \cos \theta \vert \] 
follow. Note that
\begin{eqnarray*}
\ve L_{r,\theta} z^- &=& - L_{r,\theta} \bigl( w^-(R_1,\theta)e^{\frac{a(R_1,\theta) \cos (\theta ) (r-R_1)}{\ve}} \bigr), \quad \hbox{and} \quad z^-(R_1,\theta) =0 \\
\ve L_{r,\theta} z^+ &=& - L_{r,\theta} \bigl( w^+(R_2,\theta)e^{-\frac{a(R_2,\theta) \cos (\theta ) (R_2-r)}{\ve}} \bigr), \quad \hbox{and} \quad z^+(R_2,\theta) =0.
\end{eqnarray*}
As in \cite{circle}, one can check that 
\[
\vert L_{r,\theta}  z^- \vert \leq C\frac{1}{\ve }e^{-\frac{a(R_1,\theta) \cos (\theta ) (R_1-r)}{2\ve}} .
\] 
Repeating the argument in (i),  we can then establish that
$
\vert z ^-\vert \leq CB^-(r,\theta), 
$
where $B^-(r,\theta)$ is defined in (\ref{barrier}). 
Using the fact that $\vert w^-(R_1,\theta) \vert \leq C \vert \cos \theta \vert $, the improved bounds on the derivatives in the angular direction for $w^-$ follow. An analogous argument is used for $w^+$.  

Given the construction of the cut-off functions $\psi_*(\theta)$ and $\psi^*(\theta)$, one can check that $w\equiv 0$ in fixed neighbourhoods of the characteristic points. 


\begin{thebibliography}{9}

\bibitem{john1} M. Augustin, A.  Caiazzo,  A.  Fiebach, J.  Fuhrmann, V. John, Volker, A,  Linke, R.  Umla, 
An assessment of discretizations for convection-dominated convection-diffusion equations. 
{\em Comput. Methods Appl. Mech. Engrg.} {\bf 200} (47-48), 2011, 3395–-3409.

\bibitem{bak}  N. S. Bakhvalov,  On the optimization of methods for
boundary-value problems with boundary layers. {\em J. Numer. Meth. Math.
Phys.}, {\bf 9} (4), 1969, 841--859. (in Russian)

\bibitem{fhmos} P. A. Farrell, A.F. Hegarty, J. J. H. Miller, E. O'Riordan and G. I. Shishkin, {\em Robust Computational Techniques for Boundary
Layers}, Chapman and Hall/CRC Press, Boca Raton,   (2000).

\bibitem{ray09} R. K. Dunne, E. O' Riordan and G. I. Shishkin, Fitted mesh numerical methods for singularly perturbed elliptic problems with mixed derivatives, {\em IMA J.  Num. Anal.},  {\bf 29}, 2009, 712--730.


\bibitem{tailor1} H. Han, Z. Huang and  R. B. Kellogg, 
A tailored finite point method for a singular perturbation problem on an unbounded domain. {\em J. Sci. Comput.} {\bf 36} (2), 2008, 243–-261. 

\bibitem{moscow} A. F. Hegarty and E. O'Riordan, Numerical solution of a singularly perturbed elliptic problem  on a circular domain, {\em Modeling and Analysis of Information Systems},  {\bf 23} (3),  2016, 349--356. 

\bibitem{circle} A. F. Hegarty and E. O'Riordan, Parameter-uniform numerical method for singularly perturbed convection-diffusion problem  on a circular domain,{\em Advances in Computational Mathematics}, vol. 43, no. 5, 885--909, 2017.

\bibitem{china-BAIL} A. F. Hegarty and E. O'Riordan, Numerical results for singularly perturbed convection-diffusion problems on an annulus, Proc. International Conference on Boundary and Interior Layers - Computational and Asymptotic Methods, BAIL 2016, Beijing, August 2016, Huang, Zhongyi, Stynes, Martin, Zhang, Zhimin (Eds.), Lecture Notes in Computational Science and Engineering, vol. 120, Springer,  101--112, 2017.

\bibitem{hemker} P. W. Hemker,  A singularly perturbed model problem for numerical computation, {\em J. Comp. Appl. Math}, {\bf 76}, 1996, 277--285.

\bibitem{Temam4} Y. Hong, C.-Y. Jung and R. Temam, On the numerical approximations of stiff convection-diffusion equations in a circle,
{\em Numer. Math.},  127 (2), 2014,  291--313.

 \bibitem{ilin} A. M. Il'in,   Differencing scheme for a differential equation
with a small parameter affecting the highest derivative.  {\em Math. Notes}, {\bf 6} (2), 1969, 596--602. (in Russian)
%
\bibitem{Temam1}  C.-Y. Jung and R. Temam, Convection-diffusion equations in a circle: The compatible case, 
{\em J. Math. Pures Appl.},  {\bf 96}, 2011, 88--107.

\bibitem{ladura} O. A. Ladyzhenskaya  and  N. N. Ural'tseva,  {\em Linear and
Quasilinear Elliptic Equations}. Academic Press, New York and London, (1968).

\bibitem{matus} P. Matus, The maximum principle and some of its applications. {\em Comput. Methods Appl. Math.}, {\bf 2} (1), 2002, 50-91.

\bibitem{mos}  J.\ J.\ H.\ Miller, E.\ O'Riordan and G.\ I.\ Shishkin, {\em Fitted numerical
 methods for singular perturbation problems}, World-Scientific,  Revised edition, (2012).

\bibitem{hemkera} E. O'Riordan and G. I. Shishkin, A technique to prove parameter--uniform convergence for a singularly perturbed convection--diffusion equation,  {\em J. Comp. Appl. Math},  206, 2007, 136--145.

\bibitem{prot} M. H. Protter  and  H. F.Weinberger,  {\em Maximum Principles in
Differential Equations}. Springer--Verlag, New York, (1984).

\bibitem{shs89}  G.~I.~Shishkin,  Approximation of solutions of singularly
perturbed boundary value problems with a parabolic boundary layer. {\em USSR
Comput. Maths. Math. Phys.}, {\bf 29}, (4), 1989, 1--10.
%
\bibitem{gis1} G.~I.~Shishkin, {\em Discrete approximation of singularly perturbed elliptic
and parabolic equations}, Russian Academy of Sciences, Ural
section, Ekaterinburg, (1992). (in Russian)


\end{thebibliography}
\end{document}